\crefname{appsec}{Appendix}{Appendices}
\title{\vspace{-0.9cm} Number of 1-factorizations of regular high-degree graphs}
\author{Asaf Ferber \thanks{Massachusetts Institute of Technology. Department of Mathematics. Email: {\tt ferbera@mit.edu}. Research is partially supported by an NSF grant 6935855.} \and Vishesh Jain\thanks{Massachusetts Institute of Technology. Department of Mathematics. Email: {\tt visheshj@mit.edu}}\and Benny Sudakov \thanks{Department of Mathematics, ETH, 8092 Zurich, Switzerland. Email: {\tt  benjamin.sudakov@math.ethz.ch.}	Research supported in part by SNSF grant 200021-175573.}}
\date{\today}
\newtheorem{theorem}{Theorem}[section]
\newtheorem*{namedtheorem}{\theoremname}
\newcommand{\theoremname}{testing}
\newtheorem{lemma}[theorem]{Lemma}
\newtheorem{proposition}[theorem]{Proposition}
\newtheorem*{question*}{Question}
\theoremstyle{definition}
\newtheorem{definition}[theorem]{Definition}
\newtheorem{remark}[theorem]{Remark}
\theoremstyle{plain}
\newcommand{\Bin}{\ensuremath{\textrm{Bin}}}
\global\long\def\E{\mathbb{E}}
\global\long\def\Pr{\text{Pr}}
\date{}
\begin{document}
\maketitle

\begin{abstract}
A $1$-factor in an $n$-vertex graph $G$  is a collection of $\frac{n}{2}$ vertex-disjoint edges and a $1$-factorization of $G$ is a partition of its edges into edge-disjoint $1$-factors. Clearly, a $1$-factorization of $G$ cannot exist unless $n$ is even and $G$ is regular (that is, all vertices are of the same degree). 
The problem of finding $1$-factorizations in graphs goes back to a paper of Kirkman in 1847 and has been extensively studied since then.
Deciding whether a graph has a $1$-factorization is usually a very difficult question.  For example, it took more than 60 years and an impressive tour de force of
Csaba, K\"uhn, Lo, Osthus and Treglown to prove an old conjecture of Dirac from the 1950s, which says that every $d$-regular graph on $n$ vertices contains a $1$-factorization, provided that $n$ is even and $d\geq 2\lceil \frac{n}{4}\rceil-1$. In this paper we address the natural question of estimating $F(n,d)$, the number of $1$-factorizations in $d$-regular graphs on an even number of vertices, provided that $d\geq \frac{n}{2}+\varepsilon n$. Improving upon a recent result of Ferber and Jain, which itself improved upon a result of Cameron from the 1970s, we show that 
$F(n,d)\geq \left((1+o(1))\frac{d}{e^2}\right)^{nd/2}$, which is asymptotically best possible. 
\end{abstract}
\section{Introduction}

A $1$-factorization of a graph $G$ is a collection $\mathcal M$ of edge-disjoint perfect matchings (also referred to as $1$-factors) whose union is $E(G)$. An equivalent definition of a $1$-factorization is an edge-coloring of $G$ where each color class consists of a perfect matching. Clearly, if $G$ admits a $1$-factorization then the number of vertices of $G$, denoted by $|V(G)|$, is even, and $G$ is a \emph{regular} graph (that is, all its vertices are of the same degree). The problem of finding $1$-factorizations in graphs goes back to a paper of Kirkman \cite{Kirk} from 1847 and has been extensively studied since then in graph theory and in
combinatorial designs (see, e.g., \cite{MR,Wallis} and the references therein). Despite the fact that $1$-factorizations of the complete graph are quite easy to construct (for example, see \cite{Lucas}), the problems of enumerating all the distinct $1$-factorizations and finding $1$-factorizations in graphs which are not complete are considered as much harder. 

A simple example is the problem of finding (and asymptotically enumerating) $1$-factorizations of $K_{n,n}$, the complete bipartite graph with both parts of size $n$. Note that a $1$-factorization of $K_{n,n}$ is equivalent to a Latin square, where a Latin square is an $n\times n$ array, with each row and each column being a permutation of $\{1,\ldots,n\}$ (in particular, each element appears exactly once in each row and each column). The existence of Latin squares follows easily from Hall's marriage theorem.
On the other hand, in order to prove an asymptotic formula for the number of Latin squares, one needs sophisticated estimates of the permanent of the adjacency matrix of regular bipartite graphs. 
Given an $n\times n$ matrix $M_n$, its permanent is  $\text{Per}(M_n) := \sum_{\sigma \in S_n}\prod_{i=1}^{n} m_{i,\sigma(i)}$. If $M$ is a $\{0,1\}$ matrix, then
it is easy to see that the permanent counts the number of perfect matchings in the bipartite graph with both parts of size $n$, where the $i^{th}$ vertex in the first part is connected to the $j^{th}$ vertex in the second part if and only if $m_{i,j}=1$.  It is well known (the upper bound is due to Bregman \cite{Bregman}, solving the conjecture of Minc, and the lower bound was first obtained by Egorychev \cite{Egorychev} and independently by Falikman \cite{Falikman}, solving the conjecture of Van der Waerden) that if $M$ is an $n\times n$ matrix with all entries either $0$ or $1$ whose row sums and column sums are all $d$, then $\text{Per}(M_n)=\left((1+o(1))\frac{d}{e}\right)^{n}$. 
Therefore, starting with a bipartite, $d$-regular graph $H$ with parts of size $n$, by repeatedly removing perfect matchings from $H$, applying the above bound on the obtained graph, and using Stirling's approximation (that is, $d!\approx \left((1+o(1))\frac{d}{e}\right)^d$), we get that there are 
$\left((1+o(1))\frac{d}{e^2}\right)^{nd}$ 
$1$-factorizations of $H$. 

For non-bipartite (regular) graphs, even deciding whether a single 1-factorization exists is usually
a very difficult question.  For example, it took about $60$ years and an impressive tour de force of Csaba, K\"uhn, Lo, Osthus and Treglown \cite{CKLOT} (improving an earlier asymptotic result of Perkovic and Reed \cite{PR}) to solve the following old problem of Dirac:

\begin{theorem}[\cite{CKLOT}]
Every $d$-regular graph $G$ on $n$ vertices, where $n$ is a sufficiently large even integer and $d\geq 2\lceil \frac{n}{4}\rceil-1$, contains a $1$-factorization.
\end{theorem}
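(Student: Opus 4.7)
The plan is to follow the absorption paradigm that has become standard for decomposition problems at the Dirac threshold. First I would set aside a small ``absorbing'' subgraph $H_{\text{abs}} \subseteq G$ with the property that, for any $o(d)$-regular leftover graph $L$ that could arise later on $V(G)$, the union $H_{\text{abs}} \cup L$ admits a 1-factorization. Then I would find many edge-disjoint perfect matchings in $G' := G \setminus E(H_{\text{abs}})$, stripping them off iteratively, using known results on approximate 1-factorizations (equivalently, proper edge-colorings with roughly $d$ colors) in regular graphs with minimum degree above $n/2$. After this iterative stripping, what remains on $V(G) \setminus V(H_{\text{abs}})$ is a regular graph of tiny degree; combined with the absorber, this small leftover is resolved into perfect matchings by definition of $H_{\text{abs}}$, yielding a full 1-factorization of $G$.

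The first major step is the construction of the absorber. For each potential ``local defect,'' one builds a gadget---typically a short alternating structure---that can be toggled between two states of matchings, and then shows via a probabilistic argument (a random subgraph of $G$ followed by a union bound over defects) that $G$ contains sufficiently many vertex-disjoint copies of each needed gadget, subject to $d \geq n/2+\varepsilon n$-style expansion. The second step, approximate 1-factorization, requires peeling off perfect matchings from a pseudorandom $d$-regular graph; here one can invoke Tutte's theorem together with a robust counting argument (e.g., lower bounds on the permanent of doubly stochastic matrices arising from the edge-matching polytope) to guarantee not just one but many matchings, each of which leaves the residual graph still pseudorandom.

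The main obstacle is the tightness of the threshold $d \geq 2\lceil n/4 \rceil -1$. At this threshold $G$ can be arbitrarily close to extremal configurations, most notably $K_{d+1} \sqcup K_{d+1}$ when $n=2d+2$ or near-bipartite structures; these graphs fail the pseudorandomness hypotheses needed both for the absorber construction and for the peeling step. The standard remedy is a structural dichotomy: either $G$ is \emph{far} from every extremal example, in which case the absorption approach described above goes through; or $G$ is $\varepsilon$-close to one of a short list of extremal structures, each of which must be handled by separate stability arguments tailored to its specific near-bipartite or near-two-clique shape, often by directly balancing degrees across the almost-bipartition before applying a variant of K\"onig's theorem.

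Executing the stability analysis for every near-extremal configuration, and verifying that the absorber and the approximate-decomposition step can be glued together without double-using edges, is by far the most delicate and lengthy portion of the argument, and is where the bulk of the technical work of Csaba--K\"uhn--Lo--Osthus--Treglown resides.
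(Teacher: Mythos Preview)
The paper does not prove this theorem; it is quoted verbatim from \cite{CKLOT} as background and motivation in the introduction, with no proof or even sketch given here. So there is no ``paper's own proof'' to compare your proposal against.

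That said, your outline is a reasonable high-level caricature of the actual Csaba--K\"uhn--Lo--Osthus--Treglown argument: the robustly-expanding versus near-extremal dichotomy, an absorbing structure reserved in advance, and an approximate decomposition in the robust case are indeed the three pillars of their proof. Two cautions, however. First, the absorber in \cite{CKLOT} is not a collection of local gadgets assembled by a simple union bound; it is a highly structured object built from their ``robust decomposition lemma,'' which in turn rests on Szemer\'edi-type regularity and a delicate iterated blow-up construction---this is far more involved than what you describe. Second, the approximate decomposition step does not proceed by Tutte's theorem and permanent bounds; it uses regularity-based embedding and counting (fractional/robust expansion) to peel off perfect matchings while maintaining pseudorandomness. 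Your sketch captures the architecture but understates the machinery; filling in those two steps is precisely the ``impressive tour de force'' the present paper alludes to, and is well beyond what can be reconstructed here.
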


The above theorem is clearly tight in terms of $d$ as can be seen, for example if $n=4k+2$, by taking $G$ to be the disjoint union of two cliques of size $2k+1=2\lceil \frac{n}{4}\rceil-1$ (which is odd). 

Once an existence result is obtained, one can naturally ask for the number of distinct such structures. Given a $d$-regular graph $G$, it was shown by Kahn-Lov\'asz (unpublished) and Alon-Friedland \cite{AF} that it has at most $((1+o(1))d/e)^{n/2}$ perfect matchings. Therefore, the same reasoning as above (see also \cite{LL})
shows that the number of $1$-factorizations of $G$ is at most
$\left((1+o(1))\frac{d}{e^2}\right)^{dn/2}$.
On the other hand, no matching lower bounds were known for this problem. For the  complete graph $K_n$, Cameron \cite{Cameron} proved in 1976 that  the number of $1$-factorizations is at least
$\left((1+o(1))\frac{n}{4e^2}\right)^{n^2/2}$ (off by a factor of roughly $4^{-n^2/2}$ from the upper bound), which was recently improved by Ferber and Jain \cite{FJ} to $\left((1+o(1))\frac{n}{2e^2}\right)^{n^2/2}$. For general $d$-regular graphs with $d\geq n/2+\varepsilon n$ only weaker non-trivial lower bounds of the form $n^{(1-o(1))dn/2}$ are proven implicitly in \cite{GLS} and in \cite{FLS}. 

In this paper, we give an asymptotically optimal lower bound for every $d$-regular graph $G$ on $n$ vertices with $d\geq \frac{n}{2}+\varepsilon n$. That is, we prove the following:

\begin{theorem}
  \label{main}
There exists a universal constant $C > 0$ such that for all sufficiently large even integers $n$ and all $d\geq (1/2+n^{-1/C})n$, 
every $d$-regular graph $G$ on $n$ vertices has at least
  $$\left(\left(1-n^{-1/C}\right)\frac{d}{e^2}\right)^{dn/2}$$ distinct
  1-factorizations.
\end{theorem}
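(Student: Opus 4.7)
The natural approach is to iteratively select and remove a perfect matching until no edges remain. If at every intermediate stage---i.e., every $(d-i)$-regular graph $G_i$ obtained after $i$ matchings have been removed---one has $\#\mathrm{PM}(G_i) \geq ((1-o(1))(d-i)/e)^{n/2}$, then multiplying these bounds for $i = 0, 1, \dots, d-1$ and dividing by $d!$ (to pass from ordered to unordered $1$-factorizations) gives, after Stirling's approximation,
\[
F(G) \;\geq\; \big((1-o(1))\tfrac{d}{e^2}\big)^{dn/2},
\]
which is the desired bound. The cumulative $(1-o(1))$ loss from the permanent estimate together with the $(e/d)^d$ factor arising from Stirling change the base by only a multiplicative factor of $1 - O(\log n / n)$, which fits inside the theorem's $1 - n^{-1/C}$ slack for any $C > 1$.

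The key analytic input is the perfect-matching lower bound $\#\mathrm{PM}(H) \geq ((1-o(1))k/e)^{n/2}$ for a $k$-regular graph $H$ on $n$ vertices (with $k$ not too small). I would prove this by averaging over random equipartitions: for a uniformly random ordered equipartition $(A, B)$ of $V(H)$, a standard Chernoff/Azuma argument shows that with high probability the bipartite graph $H[A, B]$ is approximately $(k/2)$-regular. The Egorychev--Falikman theorem (van der Waerden's conjecture) then gives $\#\mathrm{PM}(H[A,B]) \geq ((1-o(1))k/(2e))^{n/2}$ for typical equipartitions. To lift this into a bound on $\#\mathrm{PM}(H)$, one double-counts pairs $(M, (A,B))$ where $M$ is a perfect matching of $H$ contained entirely in $H[A,B]$: each $M$ appears in exactly $2^{n/2}$ such equipartitions (one for each assignment of its edges' endpoints to $A$ and $B$), while the sum over all $(A,B)$ is at least a constant fraction of $\binom{n}{n/2}$ times the Egorychev--Falikman bound. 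Using $\binom{n}{n/2}/2^{n/2} = \Theta(2^{n/2}/\sqrt{n})$ then yields $\#\mathrm{PM}(H) \geq ((1-o(1))k/e)^{n/2}$, effectively recovering the extra factor of $2^{n/2}$ missing from the Ferber--Jain argument for $K_n$.

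The principal obstacle is ensuring that this matching-counting lemma applies at \emph{every} stage of the iteration. For large $k$ (say $k \geq (1/2 + \Omega(1))n$), Chernoff-type bounds easily guarantee that $H[A,B]$ is nearly $(k/2)$-regular. However, as the iteration progresses and $k = d - i$ drops well below $n/2$, a random equipartition may fail to produce a sufficiently regular bipartite graph---in particular, Hall's condition can easily fail in $H[A,B]$---and the clean application of Egorychev--Falikman breaks down. To overcome this, one would exploit that the intermediate graphs $G_i$ are not arbitrary regular graphs: they are obtained from the dense initial graph $G$ by deleting only edge-disjoint perfect matchings, so they inherit quasirandom edge-distribution properties from $G$ which should ensure that their random bipartite shadows remain nearly regular. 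Alternatively, one could truncate the iterative procedure before the degree falls too low and handle the residual still-dense graph by a direct or inductive argument, for example a counting-style refinement of the Csaba--K\"uhn--Lo--Osthus--Treglown $1$-factorization theorem. The slack $n^{-1/C}$ in the hypothesis and conclusion of the theorem is exactly what is needed to absorb the loss from this hybrid treatment.
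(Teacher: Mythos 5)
Your high-level plan --- iteratively peel off perfect matchings and lower-bound the number of choices at each stage by a permanent estimate applied to a random bipartite shadow --- would give the right answer if the bound $\#\mathrm{PM}(G_i) \geq \bigl((1-o(1))(d-i)/e\bigr)^{n/2}$ held for all $i$ down to $d-i = o(d)$. The difficulty, which you correctly flag but do not resolve, is that your permanent argument collapses much earlier. The random-equipartition approach needs the bipartite shadow $H[A,B]$ to have minimum degree comfortably above $|A|/2 = n/4$ so that Hall's condition holds and one can extract a near-spanning regular bipartite subgraph to feed into Egorychev--Falikman (the paper's Lemma 2.7, imported from FLS, requires $\rho \geq 1/2$ for exactly this reason). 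That requires the current degree $k=d-i$ to stay above roughly $n/2$. But once $k$ reaches $n/2$, a $k$-regular graph can be the disjoint union of two odd cliques with \emph{no} perfect matchings at all, so no counting bound survives without extra structure. Your truncation fix does not salvage this: if you stop at $k_0 \approx n/2$, the residual $k_0$-regular graph carries roughly $(k_0/e^2)^{nk_0/2}$ of the target count --- a $\Theta(n^2\log n)$ contribution to the exponent --- so replacing the count of its factorizations by a mere CKLOT-type existence statement loses a factor that the $n^{-1/C}$ slack cannot absorb; you would need a full counting theorem for the residual, which is the original problem. Your other fix, propagating a quasirandomness invariant across the deletion of adversarially chosen perfect matchings, is also not developed and faces the same endgame obstruction, since even a genuinely quasirandom $k$-regular graph with $k$ slightly below $n/2$ defeats the Egorychev--Falikman shadow argument.

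The paper avoids the problem rather than solving it: it never iterates into the low-degree regime with the permanent estimate. It first carves off a \emph{small} (degree $o(d)$) regular bipartite subgraph $H$ that is robustly Hall-expanding, decomposes the rest into $K^3$ sparse pieces, counts near-factorizations of each piece by a R\"odl nibble branching-process argument (\`a la Dubhashi--Grable--Panconesi), extends the near-perfect matchings to perfect matchings through reserved vertex sets, and finally uses $H$ plus the uncovered leftover edges as the terminal graph to be $1$-factorized. The two things this buys are exactly the two things your proposal lacks: the nibble counts matchings at \emph{any} density (not only above $n/2$), and because the terminal graph has degree $o(d)$, its factorization may be a bare existence statement costing only a $(1-o(1))$ multiplicative factor in the base of the exponential. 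Without an analogue of those two ideas, your iterative scheme cannot be made to close.
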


\begin{remark}\label{remark:good error term} We have stated the above theorem in a stronger form ($1-n^{-\alpha}$ instead of $1-o(1)$) with the hope that it might be useful in studying the behavior of typical $1$-factorizations. That such a bound might be helpful  was recently shown by Kwan \cite{Kwan} in the study of typical Steiner triple systems (which we do not define here).\end{remark}

We conclude this introduction with a brief outline of the proof of our main result. \\


\noindent
\textbf{Proof outline}: Our proof is based on and extends ideas developed in \cite{FLS} and \cite{FJ}, and largely goes as follows:  First, we find an $r$-regular subgraph $H\subseteq G$, where $r=d^{1-\tau}$, such that any $\Delta$-regular graph $R\supset H$ with $\Delta=(1+o(1))r$ contains a $1$-factorization (we chose the letter $R$ to denote the \emph{remainder} graph obtained after deleting an `approximate' $1$-factorization from $G$) . This is actually the key part of our argument and the existence of such a graph (which, perhaps surprisingly, is quite simple!) is proven in \cref{sec:completion}. 

Next, we show that the graph $G'$ which is obtained by deleting all the edges of $H$ from $G$ contains the `correct' number of `almost' $1$-factorizations. By an `almost' $1$-factorization, we mean a collection of edge-disjoint perfect matchings that cover almost all the edges of $G'$. This part is the most technical part of the paper and is based on a suitable partitioning of the edge-set of $G'$ into sparse subgraphs (quite similar to the one in \cite{FLS}) along with a `nibbling' argument from \cite{DGP}. 

Finally, for any given `almost' $1$-factorization of $G'$, by adding all the edges uncovered by this `almost' $1$-factorization to $H$, we obtain a graph $R\supset H$ which is $\Delta$-regular with $\Delta\approx r$, and therefore admits a $1$-factorization by the property of $H$ discussed above. Since the complete proofs are anyway not too long, we postpone the more formal details to later sections. 

\section{Auxiliary results}

In this section, we have collected a number of tools and auxiliary results to be used in proving our main theorem.

\subsection{Probabilistic tools}
	
Throughout the paper, we will make extensive use of the following well-known bounds on the
	upper and lower tails of the Binomial distribution due to Chernoff
	(see, e.g., Appendix A in \cite{AlonSpencer}).

		\begin{lemma}[Chernoff's inequality]
		Let $X \sim Bin(n, p)$ and let
		${\mathbb E}(X) = \mu$. Then
			\begin{itemize}
				\item
				$\Pr[X < (1 - a)\mu ] < e^{-a^2\mu /2}$
 				for every $a > 0$;
				\item $\Pr[X > (1 + a)\mu ] <
				e^{-a^2\mu /3}$ for every $0 < a < 3/2$.
			\end{itemize}
		\end{lemma}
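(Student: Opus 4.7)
The plan is to use the standard Chernoff-Cramér exponential moment method. For the upper tail, I would start from Markov's inequality applied to $e^{tX}$: for any $t>0$,
\[
\Pr[X \geq (1+a)\mu] = \Pr[e^{tX} \geq e^{t(1+a)\mu}] \leq e^{-t(1+a)\mu}\, \E[e^{tX}].
\]
Writing $X = \sum_{i=1}^n X_i$ as a sum of i.i.d.\ Bernoulli$(p)$ variables and using independence, $\E[e^{tX}] = (1-p+pe^t)^n$. Then the elementary bound $1+x \leq e^x$ gives $\E[e^{tX}] \leq \exp(np(e^t-1)) = \exp(\mu(e^t-1))$.

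Next, I would optimize the resulting exponent $\mu(e^t-1) - t(1+a)\mu$ in $t$; the minimum occurs at $t = \ln(1+a)$, yielding
\[
\Pr[X \geq (1+a)\mu] \leq \left(\frac{e^a}{(1+a)^{1+a}}\right)^\mu.
\]
For the stated form, it remains to check the one-variable inequality $(1+a)\ln(1+a) - a \geq a^2/3$ for $0 < a < 3/2$, which follows from Taylor expansion and monotonicity of the remainder (this is the only mildly fiddly step and is, in my view, the only real obstacle).

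For the lower tail, I would mirror the same argument with $t<0$: Markov's inequality applied to $e^{-sX}$ for $s>0$ gives
\[
\Pr[X \leq (1-a)\mu] \leq e^{s(1-a)\mu}\, \E[e^{-sX}] \leq \exp\bigl(\mu(e^{-s}-1) + s(1-a)\mu\bigr),
\]
optimized at $s = -\ln(1-a)$, which produces the bound
\[
\Pr[X \leq (1-a)\mu] \leq \left(\frac{e^{-a}}{(1-a)^{1-a}}\right)^\mu.
\]
The calculus inequality $(1-a)\ln(1-a) + a \geq a^2/2$ for $0 < a < 1$ then converts this into $\exp(-a^2\mu/2)$. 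Both inequalities on $\pm a - (1\pm a)\ln(1\pm a)$ are standard and can be verified by differentiating twice and checking boundary values at $a=0$.

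Overall, the proof is short and essentially mechanical once the Chernoff-Cramér template is set up; the main (minor) obstacle is choosing the numerically convenient forms $a^2\mu/2$ and $a^2\mu/3$ and verifying the corresponding scalar inequalities, rather than carrying the slightly sharper but uglier Kullback-Leibler-type rate function through the rest of the paper.
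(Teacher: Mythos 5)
Your proposal is correct and follows the standard Chernoff--Cram\'er exponential-moment method, which is also how the result is proved in the reference the paper cites (Appendix A of Alon--Spencer); the paper itself does not reprove this lemma. One tiny clean-up worth making: for the lower tail, the optimizer $s=-\ln(1-a)$ only exists for $0<a<1$, and you should note that for $a\ge 1$ the event $\{X<(1-a)\mu\}$ is empty (when $\mu>0$), so the claimed bound for ``every $a>0$'' holds trivially there; with that caveat and the two scalar inequalities $(1+a)\ln(1+a)-a\ge a^2/3$ on $(0,3/2)$ and $a+(1-a)\ln(1-a)\ge a^2/2$ on $(0,1)$ verified by checking $f(0)=f'(0)=0$ and the sign of $f''$, the argument is complete.
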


Sometimes, we will find it more convenient to use the following concentration inequality due to Hoeffding (\cite{Hoeff}).
\begin{lemma}[Hoeffding's inequality]
\label{Hoeffding}
Let $X_{1},\dots,X_{n}$ be independent random variables such that
$a_{i}\leq X_{i}\leq b_{i}$ with probability one. If $S_{n}=\sum_{i=1}^{n}X_{i}$,
then for all $t>0$,
\[
\Pr\left(S_{n}-\E[S_{n}]\geq t\right)\leq\exp\left(-\frac{2t^{2}}{\sum_{i=1}^{n}(b_{i}-a_{i})^{2}}\right)
\]
and 
\[
\Pr\left(S_{n}-\E[S_{n}]\leq-t\right)\leq\exp\left(-\frac{2t^{2}}{\sum_{i=1}^{n}(b_{i}-a_{i})^{2}}\right).
\]
\end{lemma}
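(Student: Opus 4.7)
The plan is to establish the two tail bounds via the classical exponential moment (Chernoff) method, with Hoeffding's lemma on bounded centered random variables as the key technical ingredient. Let $Y_i = X_i - \E[X_i]$, so the $Y_i$ are independent, mean-zero, and satisfy $a_i - \E[X_i] \leq Y_i \leq b_i - \E[X_i]$, a range of width $c_i := b_i - a_i$. For any $\lambda > 0$, Markov's inequality applied to the nonnegative random variable $e^{\lambda(S_n - \E[S_n])}$ gives
\[
\Pr(S_n - \E[S_n] \geq t) \leq e^{-\lambda t}\, \E\!\left[e^{\lambda \sum_i Y_i}\right] = e^{-\lambda t}\prod_{i=1}^n \E\!\left[e^{\lambda Y_i}\right],
\]
where the factorization uses independence. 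The task then reduces to bounding each $\E[e^{\lambda Y_i}]$ sharply and optimizing $\lambda$.

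The central step is Hoeffding's lemma: if $Y$ is a mean-zero random variable supported in an interval $[\alpha,\beta]$ of length $c = \beta - \alpha$, then $\E[e^{\lambda Y}] \leq e^{\lambda^2 c^2/8}$. I would prove this by writing each $y \in [\alpha,\beta]$ as the convex combination $y = \frac{\beta - y}{c}\alpha + \frac{y - \alpha}{c}\beta$ and using convexity of the exponential to get $e^{\lambda y} \leq \frac{\beta - y}{c}e^{\lambda \alpha} + \frac{y - \alpha}{c}e^{\lambda \beta}$. Taking expectations and using $\E[Y] = 0$ yields
\[
\E[e^{\lambda Y}] \leq \frac{\beta}{c}e^{\lambda \alpha} - \frac{\alpha}{c}e^{\lambda \beta} =: e^{\varphi(\lambda)}.
\]
A short calculus computation (writing $p = -\alpha/c \in [0,1]$ and $u = \lambda c$, so $\varphi(\lambda) = -pu + \log(1 - p + p e^u)$) shows $\varphi(0) = \varphi'(0) = 0$ and $\varphi''(u) \leq 1/4$ uniformly in $u$, so Taylor's theorem gives $\varphi(\lambda) \leq \lambda^2 c^2/8$.

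Combining the lemma with the product bound,
\[
\Pr(S_n - \E[S_n] \geq t) \leq \exp\!\left(-\lambda t + \tfrac{\lambda^2}{8}\sum_{i=1}^n c_i^2\right).
\]
The right-hand side is minimized at $\lambda = 4t/\sum_i c_i^2 > 0$, and substituting yields the upper tail
\[
\Pr(S_n - \E[S_n] \geq t) \leq \exp\!\left(-\frac{2t^2}{\sum_{i=1}^n (b_i - a_i)^2}\right).
\]
The lower tail follows immediately by applying the upper bound to the variables $-X_i$, which are independent and satisfy $-b_i \leq -X_i \leq -a_i$ (same interval lengths $c_i$).

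The only real obstacle is the analytic verification that $\varphi''(u) \leq 1/4$; this is a one-variable exercise (the maximum of $p(1-p)$ over $p\in[0,1]$), and once it is in hand the rest of the argument is a routine assembly. The lower-tail statement in the lemma as written is simply the symmetric reflection, so no additional work beyond the substitution $X_i \mapsto -X_i$ is required.
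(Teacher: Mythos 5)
The paper states this lemma as a standard probabilistic tool and cites it directly from Hoeffding's 1963 paper without reproducing a proof, so there is no in-paper argument to compare against. Your proof is correct and is the classical one: the exponential-moment (Chernoff) bound combined with Hoeffding's lemma $\E[e^{\lambda Y}]\le e^{\lambda^2 c^2/8}$ for a centered variable of range $c$, with the lemma itself established by convexity of $e^{\lambda y}$ and the elementary bound $\varphi''\le 1/4$, then optimizing $\lambda = 4t/\sum_i c_i^2$; the lower tail follows by replacing $X_i$ with $-X_i$. All steps check out, including the Taylor estimate and the optimization.
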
 

\noindent

\subsection{Completion} 
\label{sec:completion}
In this section we present the completion step, which uses some ideas from \cite{FJ}, and is a key ingredient of our proof. Before stating the relevant lemma, we need the following definition.
\begin{definition}
\label{defn-good-graph}
A graph $H=(A\cup B,E)$ is called $(\alpha,r,m)$-good if it
satisfies the following properties:
\begin{enumerate}[$(G1)$]
\item $H$ is an $r$-regular, balanced bipartite graph with $|A|=|B|=m$.
\item Every balanced bipartite subgraph $H'=(A'\cup B',E')$ of $H$ with
$|A'|=|B'|\geq(1-\alpha)m$ and with $\delta(H')\geq(1-2\alpha)r$
contains a perfect matching.
\end{enumerate}
\end{definition}
The motivation for this definition comes from the next proposition, which shows that a regular graph on an even number of vertices, which can be decomposed into a good graph and a graph of `small' maximum degree, has a 1-factorization.
\begin{proposition}
\label{proposition-completion}
There exists a sufficiently large integer $m_0$ for which the following holds. Let $m\geq m_0$, and 
suppose that $H=(A\cup B, E(H))$ is an $(\alpha,r_1,m)$-good graph with $m^{1/10} \leq r_1 \leq m$ and $\log{m}/r_1 \ll \alpha^{2} < 1/100$. Then, for every $r_2\leq \alpha^4r_1/\log{m}$, every $r:=r_1 + r_2$-regular (not necessarily bipartite) graph $R$ on the vertex set $A\cup B$, for which $H\subseteq R$, admits a 1-factorization.
\end{proposition}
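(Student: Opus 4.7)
Let $R' := R\setminus H$, which is $r_2$-regular on $A\cup B$ (though not bipartite). Because $H$ contains no edges inside $A$ or inside $B$, a simple degree count shows $|E(R'[A])| = |E(R'[B])|$. I will build a $1$-factorization of $R$ in two stages. In \emph{Stage~1} I produce $K$ edge-disjoint perfect matchings $M_1,\dots,M_K$ of $R$ whose union contains every edge of $R'$, with $K\ll r_1$; in \emph{Stage~2} the leftover graph $R\setminus\bigcup_j M_j$ is an $(r-K)$-regular subgraph of $H$, hence bipartite, and K\"onig's edge-coloring theorem decomposes it into $r-K$ further perfect matchings. Together this gives $r$ perfect matchings, i.e.\ a 1-factorization.

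Each $M_j$ in Stage~1 will have the form $M_j = X_j\cup N_j$, where $X_j\subseteq R'$ is a \emph{balanced} matching ($|V(X_j)\cap A| = |V(X_j)\cap B|$) of size at most $\alpha m/2$ and $N_j\subseteq H$. To construct such a partition of $E(R')$, I K\"onig-color the bipartite piece $R'[A,B]$, Vizing-color $R'[A]$ and $R'[B]$ separately, split each resulting color class into pieces of size at most $\alpha m/4$, and pair up $A$-interior pieces with $B$-interior pieces of equal size---feasible because $|E(R'[A])| = |E(R'[B])|$. Using $r_2\le \alpha^4 r_1/\log m$ one checks that this yields $K = O(r_2/\alpha) \le O(\alpha^3 r_1/\log m)$ balanced matchings. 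Processing the $X_j$'s in order, at step $j$ I seek $N_j$ as a perfect matching of
\[
H^{(j)} \;:=\; \bigl(H\setminus {\textstyle\bigcup_{i<j}} N_i\bigr)\bigl[A\setminus V(X_j),\; B\setminus V(X_j)\bigr].
\]
The $N_j$'s are then automatically edge-disjoint, and to apply property~(G2) it suffices that $|A\setminus V(X_j)|\ge (1-\alpha)m$ (immediate from $|V(X_j)\cap A|\le \alpha m$) and that $\delta(H^{(j)})\ge (1-2\alpha)r_1$.

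The main obstacle is this last min-degree condition. For $v\in A\setminus V(X_j)$,
\[
\deg_{H^{(j)}}(v) \;\ge\; r_1 - (j-1) - \bigl|N_H(v)\cap V(X_j)\cap B\bigr|,
\]
and a priori the last term could be as large as $r_1$. To control it I plan to construct the $X_j$'s \emph{probabilistically}: randomize the bijection that pairs $A$-interior with $B$-interior pieces and randomize the labels of the crossing-edge sub-matchings, so that for each fixed $v$ and $j$, $V(X_j)\cap B$ behaves like a roughly uniform random subset of $B$ of size at most $\alpha m$. Then $\mathbb{E}|N_H(v)\cap V(X_j)\cap B|\le \alpha r_1$; Chernoff's inequality gives $|N_H(v)\cap V(X_j)\cap B|\le 1.1\,\alpha r_1$ except with probability $\exp(-\Omega(\alpha r_1))$. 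The hypothesis $\log m/r_1\ll \alpha^2$ makes this estimate dominate a union bound over all $2mK$ pairs $(v,j)$, producing a deterministic realization in which every $H^{(j)}$ satisfies the hypotheses of~(G2). Combined with $j-1\le K\ll \alpha r_1$, this yields $\delta(H^{(j)})\ge (1-2\alpha)r_1$ for all $j$, so (G2) supplies the required $N_j$, completing Stage~1 and---together with the K\"onig-based Stage~2---the proof.
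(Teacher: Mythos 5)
Your two-stage plan and the opening observations (the identity $e(R[A]) = e(R[B])$ from regularity, Vizing/K\"onig to decompose the non-bipartite edges, property~(G2) to extend balanced cross-matchings to perfect matchings, K\"onig's theorem to finish off the regular bipartite remainder) all match the paper's strategy at a high level. The paper, however, proceeds \emph{iteratively}: at each round it removes only a small, \emph{genuinely random} sub-matching $M''_X$ of a single Vizing color class $M_X$ (each edge of $M_X$ kept independently with probability $3\alpha/4$), shrinking $e(R_i[A])$ by a constant factor per round. This Bernoulli sub-sampling is precisely what makes the min-degree control work: for fixed $v$, the number of $H$-neighbors of $v$ that get matched in $M''_A\cup M''_B$ is stochastically dominated by $\mathrm{Bin}(r,3\alpha/4)$, and Chernoff applies directly.

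The step in your argument that does not hold up is the claim that randomizing the \emph{pairing} of $A$-interior with $B$-interior pieces and the \emph{labels} of the crossing pieces makes $V(X_j)\cap B$ ``behave like a roughly uniform random subset of $B$.'' The pieces are fixed sub-matchings coming from a fixed Vizing/K\"onig coloring of $R'$; every $B$-interior piece has its vertex set contained in $S_B := V(R'[B])$, the fixed support of $R'$ inside $B$. Suppose $|S_B|=m/10$ (consistent with $R'$ being $r_2$-regular, e.g.\ if all $R'$-edges of $S_B$-vertices are internal) and some $v\in A$ has all of $N_H(v)$ inside $S_B$ (consistent with $H$ being $(\alpha,r_1,m)$-good whenever $r_1\le|S_B|$). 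Then, writing $N$ for the number of $B$-interior pieces, one has $\sum_P |N_H(v)\cap V(P)| = \sum_{u\in N_H(v)}\deg_{R'[B]}(u) = r_1 r_2$ and $N\le 4e(R'[B])/(\alpha m)+r_2+1 \le r_2/(4\alpha)$, so the average of $|N_H(v)\cap V(P)|$ over $B$-interior pieces is at least $4\alpha r_1$. Hence some fixed piece $P^\ast$ satisfies $|N_H(v)\cap V(P^\ast)| \ge 4\alpha r_1 > 1.1\,\alpha r_1$, and since $P^\ast$ is assigned to \emph{some} index $j$ under any bijection, the bound you need fails with probability $1$, not with the $\exp(-\Omega(\alpha r_1))$ failure probability that the Chernoff/union-bound step posits. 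The randomization is over which deterministic piece lands at index $j$, not over the vertex set of the piece, so there are no independent coins for Chernoff to exploit. Repairing this requires putting the randomness \emph{inside} the sub-matching selection, as the paper does, and this in turn pushes you back to the paper's iterative, shrink-per-round scheme (with $O(r_2\log m/\alpha)$ rounds) rather than a one-shot pre-partition of $E(R')$.
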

\begin{proof}
First, observe that $e(R[A])=e(R[B])$. Indeed, as $R$ is $r$-regular, we have for $X\in \{A,B\}$ that 
$$rm=\sum_{v\in X}d_R(v)=2e(R[X])+e(R[A,B]),$$ from which the above equality follows. Moreover, $\Delta(R[X])\leq r_2$ for all $X\in \{A,B\}$ since the only edges in $R[X]$ come from $R\setminus H$. 
Next, let $R_0:=R$ and $f_0:=e(R_0[A])=e(R_0[B])$.  
By Vizing's theorem (\cite{Vizing}), both $R_0[A]$ and $R_0[B]$ contain matchings of size exactly $\lceil f_0/(r_2+1)\rceil$. Consider any two such matchings $M_A$ in $A$ and $M_B$ in $B$, and for $X\in \{A,B\}$, let $M'_X\subseteq M_X$ 
denote a matching of size $|M'_X| =\lfloor \alpha f_0/2r_2 \rfloor$ such that no vertex $v \in V(H)$ is adjacent to more than $3\alpha r_1/2$ vertices which are paired in the union of the two matchings. 

To show that such $M'_X$ must exist, it suffices to show that for $X\in \{A,B\}$, there exist matchings $M''_{X}\subseteq M_X$ with $|M''_X| \geq \lfloor \alpha f_0/2r_2 \rfloor$ such that no vertex $v\in V(H)$ is incident in $H$ to more than $3\alpha r_1/2$ vertices which are paired in the union of the two matchings, since we can obtain $M'_X$ from $M''_X$ by simply removing the appropriate number of edges arbitrarily. Note that if the size of $M_X$ is at most $3\alpha r_1/4$, then this follows trivially by taking $M''_X = M_X$. When the size of $M_X$ is at least $3\alpha r_1/4$, the existence of $M''_X$ is seen using the following simple probabilistic argument. Let $M''_X$ denote the random matching obtained by including each edge of $M_X$ independently with probability $3\alpha/4$. By Chernoff's inequality, $|M''_X| \geq 3\alpha |M_X|/5 > \lfloor \alpha f_0/2r_2\rfloor$, except with probability at most $\exp(-\Theta(\alpha |M_X|)) \leq \exp(-\Theta(\alpha^{2}r_1))\ll 1$, where the last inequality uses the assumption that $\alpha^{2} r_1 \gg \log{m} \gg 1$. Moreover, each $v\in V(H)$ is incident to at most $r$ vertices, each of which has at most $1$ edge in $M_A \cup M_B$. Hence, the number of vertices paired in $M''_A \cup M''_B$ incident to a fixed $v\in V(H)$ is stochastically dominated by $Bin(r,3\alpha /4)$. In particular, by Chernoff's inequality, the probability that a fixed $v\in V(H)$ is incident to at least $3\alpha r_1/2$ vertices matched in $M''_A \cup M''_B$ is at most $\exp(-\Theta(\alpha r)) \ll 1/m$, where the last inequality uses the assumption that $\alpha r_1 \geq \alpha^{2}r_1 \gg \log{m}$. Therefore, the union bound shows that the above claim holds simultaneously for all vertices $v\in V(H)$ with probability close to $1$. It follows that there exist $M''_A$ and $M''_B$ with the desired properties.       


Delete the \emph{vertices} in $\left(\cup M'_A\right)\bigcup \left(\cup M'_B\right)$, as well as any edges incident to them, from  $H$ and denote the resulting graph by $H'=(A'\cup B',E')$. Since $|A'|=|B'|\geq (1-\alpha)|A|$ and $\delta(H')\geq (1-3\alpha/2)r_1$ by the choice of $M'_X$, it follows from $(G2)$ that $H'$ contains a perfect matching $M'$. Note that $M_0:=M'\cup M'_A\cup M'_B$ is a perfect matching in $R_0$. We repeat this process with $R_1:=R_0 - M_0$ (deleting only the edges in $M_0$, and not the vertices) and $f_1 :=e(R_1[A])=e(R_1[B])$ until we reach $R_k$ and $f_k$ such that $f_k\leq r_2$. Since $f_{i+1} \leq \left(1-\alpha/3r_2\right)f_i$, this must happen after at most $3r_2\log{m}/\alpha < \alpha^{2} r_1$ steps. Moreover, since $\deg(R_{i+1}) = \deg(R_i)-1$, it follows that during the first $\lceil 3r_2\log {m}/\alpha\rceil$ steps of this process, the degree of any $R_j$ is at least $r_1 - \alpha^2 r_1$. Therefore, since $(r_1 - \alpha^2r_1) - 3\alpha r_1/2 \geq (1-2\alpha)r_1$, we can indeed use $(G2)$ throughout the process, as done above.  
 
From this point onwards, we continue the above process (starting with $R_k$) with matchings of size one i.e. single edges from each part, until no more edges are left. By the choice of $f_k$, we need at most $r_2$ such iterations, which is certainly possible since $r_2 + 3r_2\log{m}/\alpha < \alpha^{2} r_1$ and $(r_1 - \alpha^{2}r_1)-3\alpha r_1/2 \geq (1-2\alpha)r_1$. After removing all the perfect matchings obtained via this procedure, we are left with a regular, balanced, \emph{bipartite} graph, which admits a 1-factorization (this follows by a more or less direct application of Hall's marriage theorem \cite{Hall}). Taking any such 1-factorization along with all the perfect matchings that we removed gives a 1-factorization of $R$.     
\end{proof}

The remainder of this subsection is devoted to proving the following proposition, which shows that every $d$-regular graph on $n$ vertices contains a `good' subgraph $H$, assuming that $d\geq n/2+\varepsilon n$ and $n$ is a sufficiently large even integer.

\begin{proposition}
  \label{lemma: G contains a resilient subgraph}
Let $n$ be a sufficiently large even integer, and let $0 < \varepsilon = \varepsilon(n) < 1$ be such that $\varepsilon^{4}\gg \log{n}/n$. Let $G$ be a graph on $n$ vertices  which is $d$-regular, with $d\geq n/2+\varepsilon n$. Then, for every $p=\omega\left( \frac{\log n}{n\varepsilon^{3}}\right)\leq 1$, there exists a spanning subgraph $H$ of $G$ which is $(\varepsilon/10,r,n/2)$-good, for some $(1-\varepsilon/1000)dp/2 \geq r\geq (1-\varepsilon/100)dp/2$.
\end{proposition}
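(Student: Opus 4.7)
My plan is to construct $H$ as a regularized random bipartite subgraph of $G$, and to verify property (G2) by combining the minimum-degree hypothesis on $H'$ with a bipartite pseudo-randomness statement for $H$. I begin by choosing a uniformly random balanced partition $V(G)=A\sqcup B$ with $|A|=|B|=m:=n/2$. By hypergeometric concentration (equivalently, Chernoff on the bipartition), with high probability every vertex $v$ satisfies $|N_G(v)\cap A|,|N_G(v)\cap B|=(1\pm n^{-1/4})d/2$, and every sufficiently large pair of sets $S\subseteq A, T\subseteq B$ satisfies $e_{G[A,B]}(S,T)=(1\pm\varepsilon/1000)|S||T|d/n$.

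Next, let $H_0$ be the random subgraph of $G[A,B]$ obtained by retaining each edge independently with probability $p$. A second round of Chernoff, conditional on the bipartition, yields with high probability both $\deg_{H_0}(v)=(1\pm\varepsilon/1000)dp/2$ for every $v$ and the bipartite pseudo-randomness $e_{H_0}(S,T)=(1\pm\varepsilon/100)|S||T|dp/n$ for all $(S,T)$ with $\min(|S|,|T|)$ above the Chernoff/union-bound threshold; the hypothesis $p=\omega(\log n/(n\varepsilon^{3}))$ is precisely what makes that union bound close in the regime that matters. I then regularize $H_0$ via a standard K\"onig/Hall argument on an ``excess'' bipartite auxiliary graph, extracting a spanning subgraph $H\subseteq H_0$ that is exactly $r$-regular for some integer $r$ in the window $[(1-\varepsilon/100)dp/2,\,(1-\varepsilon/1000)dp/2]$. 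Since the discarded part $H_0\setminus H$ has maximum degree $O(\varepsilon\cdot dp)$, the pseudo-randomness of $H_0$ transfers to $H$ with only an $O(\varepsilon)$-loss, and property (G1) holds by construction.

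Finally, I verify (G2). Set $\alpha=\varepsilon/10$ and let $H'\subseteq H$ be a balanced bipartite subgraph with $|A'|=|B'|\geq(1-\alpha)m$ and $\delta(H')\geq(1-2\alpha)r$. Assume for contradiction that $H'$ has no perfect matching; by the defect form of Hall's theorem, there exist non-empty $S\subseteq A'$ and $T\subseteq B'$ with $|S|+|T|>|A'|$ and $e_{H'}(S,T)=0$. Without loss of generality $|S|\leq|T|$, so $|T|\geq|A'|/2\geq m/3$. If $|S|\leq(1-2\alpha)r$, the minimum-degree hypothesis on $H'$ immediately forces $|N_{H'}(S)|\geq(1-2\alpha)r\geq|S|$, a contradiction. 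Otherwise, each vertex of $S$ loses at most $2\alpha r$ edges in passing from $H$ to $H'$, so $e_H(S,T)\leq 2\alpha r|S|$; but pseudo-randomness of $H$ gives $e_H(S,T)\geq(1-\varepsilon/100)r|S||T|/m\geq(1-\varepsilon/100)r|S|/3$, which is incompatible with the previous bound for $\alpha=\varepsilon/10$.

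The main technical hurdle will be proving the pseudo-randomness of $H$ across the full range of set sizes appearing in the Hall argument above. When $p$ lies near its permitted lower bound, $r$ itself may be much smaller than $\alpha m$, so one must push the pseudo-randomness down to sets of size as small as $(1-2\alpha)r$ on the smaller side -- exactly the regime in which the Chernoff/union-bound tandem becomes delicate and consumes essentially all the slack provided by $pn\varepsilon^{3}=\omega(\log n)$. The argument likely proceeds by fixing the larger side $T$ first (so $|T|\geq m/3$ reduces the relevant entropy to $O(m)$) and then Chernoff-bounding $\sum_{v\in S}|N_H(v)\cap T|$ uniformly over $S$ via a conditional union bound. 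A secondary challenge is executing the regularization step without degrading pseudo-randomness by more than an $O(\varepsilon)$-factor, which is typically handled by controlling the maximum degree of the correction subgraph through an auxiliary $b$-matching / Hall argument on an excess multigraph.
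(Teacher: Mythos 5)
Your high-level plan — random balanced bipartition, random sparsification, regularization to an $r$-factor, Hall-plus-pseudorandomness verification of $(G2)$ — resembles the paper's, but there is a genuine gap in the order in which you sparsify and regularize, and it is not cosmetic.

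You propose to first form $H_0=G[A,B]_p$ and then ``regularize'' $H_0$ to an exact $r$-factor by some K\"onig/Hall or $b$-matching argument. The standard tool here (Lemma~20 of \cite{FLS}, the one the paper actually invokes) needs the degree spread $\Delta-\delta$ to be at most $\xi^{2}/m$, where $\xi$ is the slack between $\delta$ and the target $r$. In the dense bipartite graph $G'=G[A,B]$ the spread is $O(\sqrt{d\log n})$ while $\xi$ can be taken $\Theta(\varepsilon m)$, so $\xi^{2}/m=\Theta(\varepsilon^{2}m)\gg\sqrt{d\log n}$ and the lemma applies under the hypothesis $\varepsilon^{4}\gg\log n/n$. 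But in your sparsified graph $H_0$ the spread is $\Theta(\sqrt{dp\log n})$ whereas the available slack is only $\xi=\Theta(\varepsilon dp)$, giving $\xi^{2}/m=\Theta(\varepsilon^{2}(dp)^{2}/m)$. For $p$ near its lower threshold (e.g.\ $p=\Theta(\varepsilon^{2})$, the choice used in the main theorem), the inequality $\sqrt{dp\log n}\lesssim\varepsilon^{2}(dp)^{2}/m$ reduces to $\varepsilon^{10}\gtrsim\log n/n$, which is strictly stronger than the stated hypothesis $\varepsilon^{4}\gg\log n/n$. So a naive ``sparsify-then-regularize'' via degree-gap lemmas cannot cover the full range of $\varepsilon$ in the proposition. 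The paper sidesteps this by reversing the order: it first extracts a $\rho m$-factor from the \emph{dense} bipartite graph $G'$ (where the degree-gap lemma does apply), and then uses Theorem~1.4 of \cite{FKS} — a Gale--Ryser-plus-Chernoff statement — to conclude that $G'_p$ still contains a $(1-\tau)\rho mp$-factor. The Gale--Ryser route is essential: it transfers the factor through sparsification without ever needing the sparse graph's degrees to concentrate as tightly as a degree-gap lemma would demand. If you want to stay with your order of operations, you would essentially have to reprove the FKS theorem inside your regularization step, which is not what a ``K\"onig/Hall on an excess multigraph'' argument delivers.

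A secondary point: you aim for two-sided pseudo-randomness, in particular a lower bound $e_{H}(S,T)\geq(1-\varepsilon/100)r|S||T|/m$, and you correctly flag that the union bound is delicate. The paper avoids this entirely by using only an \emph{upper} bound $e_{G'_p}(X,Y)<(1/2+\tau)pm|X|$ for $|X|=|Y|\le m/2$ (\cref{lemma:expandingneighbourhood}), which is cleanly proved via the one-sided Chernoff tail using $e_{G'}(X,Y)\le m|X|/2$, and then combines it with the trivial lower bound $e_{H'}(X',Y')\ge\delta(H')|X'|$ supplied by the Hall witness. Your case split (``$|S|\le(1-2\alpha)r$'' versus the rest) restricts the problematic lower-bound regime to $|S|\gtrsim r$, which probably does make the union bound close, but it is extra work that the paper's formulation renders unnecessary. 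I'd recommend adopting the one-sided pseudo-randomness inequality and the balanced ($|X'|=|Y'|$) Hall defect form, which removes both the two-sided concentration and the case analysis.
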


To prove this proposition, we will use the following three results. The first result is a theorem from \cite{FKS}, which states that if $G$ is a bipartite graph with sufficiently large minimum degree which contains an $r$-factor (i.e. a spanning $r$-regular subgraph) for $r$ sufficiently large, then the random graph $G_p$, which is obtained by keeping each edge of $G$ independently with probability $p$, typically contains a $(1-o(1))rp$-factor. The proof of this theorem follows quite easily by using the Gale-Ryser criterion for the existence of $r$-factors in bipartite graphs (\cite{Gal}, \cite{Ry}), and standard applications of Chernoff's bounds.

\begin{theorem}[Theorem 1.4 in \cite{FKS}]
\label{thm:random contains factor}
Let $m$ be a sufficiently large integer. Then, for any positive $\tau$ such that $\log{m}/m \ll \tau^{3} < 1$, $\alpha = 1/2+\tau$ and $0 < \rho \leq \alpha $, the following holds. Suppose that: 
\begin{enumerate}
  \item $G$ is bipartite with parts $A$ and $B$, both of size $m$,
  \item $\delta(G)\geq \alpha m$, and
  \item $G$ contains a $\rho m$-factor.
\end{enumerate}
Then, for $p=\omega\left(\frac{\log m}{m\tau^{3}}\right)$, the random graph $G_p$ has a $k$-factor for $k=(1-\tau)\rho mp$ with probability $1-m^{-\omega(1)}$.
\end{theorem}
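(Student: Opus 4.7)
The plan is to build $H$ in three stages: first, a random bipartition to produce a nearly-regular bipartite subgraph $G_0$; second, an application of Theorem~\ref{thm:random contains factor} to a $p$-sparsification of $G_0$ to obtain an $r$-regular bipartite subgraph $H$; and third, a verification of property $(G2)$ via Hall's theorem combined with a pseudorandomness argument on $H$.

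Stages 1 and 2: Let $(A, B)$ be a uniformly random balanced partition of $V(G)$ with $|A| = |B| = m := n/2$, and set $G_0 := G[A, B]$. For each vertex $v$, the number of its neighbors in the opposite part is hypergeometric with mean $(1+o(1))d/2$, so Chernoff's inequality and a union bound give, with high probability, $\delta(G_0), \Delta(G_0) \in (1 \pm \varepsilon/10^4)(d/2)$. Next we show that $G_0$ contains a $\rho m$-factor for $\rho m := \lfloor (1-\varepsilon/2000)(d/2) \rfloor$, by verifying the standard max-flow criterion (a $k$-factor exists iff $e_{G_0}(S,T) \geq k(|S|+|T|-m)$ for all $S \subseteq A, T \subseteq B$). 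Using the bounds $e_{G_0}(S,T) \geq |S|(\delta(G_0) - m + |T|)$ and its symmetric counterpart, a short case analysis exploiting the near-regularity of $G_0$ verifies the criterion for all pairs with $|S|+|T| > m$; the remaining pairs are automatic. Since $\delta(G_0) \geq (1/2 + \varepsilon/4)m$, taking $\tau := \varepsilon/300$ makes the hypothesis $p = \omega(\log n/(n\varepsilon^3))$ fulfill the requirement $p = \omega(\log m /(m \tau^3))$ of Theorem~\ref{thm:random contains factor}. That theorem then delivers, with probability $1 - n^{-\omega(1)}$, a $k$-factor $H$ of $(G_0)_p$ with $k = (1-\tau)\rho m p$; by the choice of $\tau$ and $\rho$, $k$ lies in the target window $[(1-\varepsilon/100)dp/2, (1-\varepsilon/1000)dp/2]$, and we set $r := k$. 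Property $(G1)$ is immediate.

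Stage 3 (verification of $(G2)$): Fix any candidate $H' \subseteq H[A', B']$ as in Definition~\ref{defn-good-graph}, and write $\alpha := \varepsilon/10$. By Hall's theorem, it suffices to show $|N_{H'}(S)| \geq |S|$ for every $S \subseteq A'$. If $|S| \leq (1-2\alpha)r$, the min-degree assumption on $H'$ already gives $|N_{H'}(S)| \geq (1-2\alpha)r \geq |S|$. Assume instead $|S| > (1-2\alpha)r$ and that Hall's fails. Setting $T^* := N_{H'}(S) \cup (B \setminus B')$ and using $N_{H'}(S) \subsetneq B'$, one checks that $|T^*| \leq m - 1$, $|T^*| < |S| + \alpha m$, and $|N_H(v) \cap T^*| \geq (1-2\alpha)r$ for every $v \in S$. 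We rule such $(S, T^*)$ out using the following pseudorandomness property of $H$, which we intend to prove holds with high probability: for every $T \subseteq B$ with $|T| \leq m-1$, if $X_T := \{v \in A : |N_H(v) \cap T| \geq (1-2\alpha)r\}$, then
\[
    |X_T| \leq \max\bigl\{(1-2\alpha)r,\; |T| - \alpha m\bigr\}.
\]
Applied to $T = T^*$, this contradicts $|S| \leq |X_{T^*}|$ together with $|S| > (1-2\alpha)r$ and $|S| > |T^*| - \alpha m$.

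The main technical obstacle is proving the pseudorandomness bound on $|X_T|$. For a fixed $T$, the event $\{|N_H(v) \cap T| \geq (1-2\alpha)r\}$ forces $v$'s random $H$-neighborhood (a near-maximum-size random subset of $N_{(G_0)_p}(v)$) to concentrate inside $T$; by Chernoff, this has probability exponentially small in $r \cdot (1 - 2\alpha - |T|/m)^2$ whenever $|T|/m < 1 - 2\alpha$. A union bound over $T$, stratified by size and combined with $p = \omega(\log n /(n\varepsilon^3))$ (which gives $r \gg \log n /\varepsilon^3$), handles the main regime $|T| \lesssim (1-2\alpha)m$. The boundary regime $|T| > (1-2\alpha)m$---where the Chernoff bound above gives no decay---requires a dual structural argument: one instead bounds, for every $T' \subseteq A$ of appropriate size, the number of $u \in B$ with $|N_H(u) \cap T'| \geq (1-2\alpha)r$, using symmetric Chernoff bounds combined with the min-degree budget constraints imposed on $H'$ from both sides. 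The slack in $\alpha = \varepsilon/10$ and $\tau = \varepsilon/300$ provides enough margin for both union bounds to close.
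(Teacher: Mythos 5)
Your proposal does not prove the stated theorem. What you have written is a blind attempt at \cref{lemma: G contains a resilient subgraph} (the proposition asserting that a dense regular graph contains an $(\varepsilon/10,r,n/2)$-good subgraph), not at \cref{thm:random contains factor}. The giveaway is Stage 2, where you explicitly invoke ``Theorem~\ref{thm:random contains factor}'' to pass from the near-regular bipartite graph $G_0$ to an $r$-regular subgraph of $(G_0)_p$---that is, you use the target statement as a black box, which would make the argument circular if read as a proof of that very statement. The theorem you were asked to prove is a self-contained claim about random sparsification of a single bipartite graph $G$ (already given as bipartite with parts of size $m$): if $G$ has minimum degree at least $(1/2+\tau)m$ and contains a $\rho m$-factor, then $G_p$ w.h.p.\ contains a $(1-\tau)\rho mp$-factor. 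Nothing in your proposal establishes this.

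The paper defers this to Theorem~1.4 of~\cite{FKS} and remarks that the proof goes via the Gale--Ryser criterion for $r$-factors in bipartite graphs combined with standard Chernoff bounds: one writes down the condition equivalent to the existence of a $k$-factor and verifies it directly in $G_p$ for $k=(1-\tau)\rho mp$, using the minimum-degree hypothesis and the assumed $\rho m$-factor of $G$ to control the relevant edge counts, then takes a union bound over vertex subsets. You do gesture at the max-flow criterion, but only to certify a \emph{deterministic} $\rho m$-factor in $G_0$, after which you hand off to the theorem under scrutiny. If you want to prove \cref{thm:random contains factor}, you should drop the random-bipartition Stage~1 and the Hall/pseudorandomness Stage~3 entirely (those belong to the proof of \cref{lemma: G contains a resilient subgraph}) and instead verify the Gale--Ryser condition for $G_p$ directly via concentration, being careful to let $\tau$ depend on $m$ as required by the remark following the theorem.
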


\begin{remark}
In \cite{FKS}, $\tau$ is taken to be some positive constant, as opposed to a function of $m$ which can go to $0$ as $m$ goes to infinity. However, the exact same proof actually gives the slightly more general result stated above.     
\end{remark}

The second result shows that if $G$ is a bipartite graph with parts of size $m$, then with high probability, the number of edges in $G_p$ between subsets $X$ and $Y$ with $|X|=|Y|\leq m/2$ is not much more than $pm|X|/2$.  
\begin{lemma}
\label{lemma:expandingneighbourhood}
Let $G = (A\cup B,E)$ be a bipartite graph with parts $A$ and $B$, both of size $m$. Let $c = 1/2 + \tau$ with $0<\tau = \tau(m) < 1$, and let $p=\omega\left(\frac{\log{m}}{m\tau^{2}}\right)$. Then, for $G_p$, the following holds with probability at least $1-m^{-\omega(1)}$: $e_{G_p}(X,Y) < cpm|X|$ for any subsets $X\subseteq A$ and $Y\subseteq B$ with $|X|=|Y| \leq m/2$.
\end{lemma}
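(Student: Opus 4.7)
The plan is a straightforward first-moment union-bound argument, with Chernoff-type concentration providing the per-pair tail bound. Fix $1 \le k \le m/2$ and subsets $X\subseteq A$, $Y\subseteq B$ with $|X|=|Y|=k$. The random variable $Z := e_{G_p}(X,Y)$ is a sum of $N := e_G(X,Y)$ independent $\mathrm{Bernoulli}(p)$ indicators, with mean $\mu = pN$. The key numerical input is the trivial bound $N \le |X|\cdot|Y| = k^2$ together with the hypothesis $k \le m/2$, which give $\mu \le pk^2 \le pmk/2$. Consequently, the target threshold $cpmk = (1/2+\tau)pmk$ exceeds the mean by at least
$$cpmk - \mu \;\ge\; pk(cm - k) \;\ge\; \tau\, pmk.$$

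Setting $t := \tau pmk$ and applying the upper-tail Chernoff/Bernstein inequality to $Z$ (whose variance is at most $\mu \le pmk/2$), one obtains
$$\Pr[Z \ge cpmk] \;\le\; \exp\!\left(-\frac{t^{2}}{2\mu + \tfrac{2}{3}t}\right) \;\le\; \exp\!\bigl(-\Omega(\tau^{2} pmk)\bigr),$$
since $2\mu + \tfrac{2}{3}t \le pmk + \tfrac{2}{3}cpmk \le 2pmk$. (If one prefers to work only with the Chernoff form stated in the excerpt, the same bound can be obtained by splitting into the regimes $t \le \mu$ and $t > \mu$; the former uses $\Pr[Z\ge (1+a)\mu]\le e^{-a^{2}\mu/3}$ directly, while the latter uses $\Pr[Z\ge s] \le (e\mu/s)^{s}$ after observing that $s$ itself exceeds $\Omega(\tau^{2}pmk)$.)

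I then take a union bound over the at most $\binom{m}{k}^{2} \le (em/k)^{2k} \le \exp(O(k\log m))$ pairs $(X,Y)$ with $|X|=|Y|=k$, and sum over $k \in \{1,\dots,m/2\}$:
$$\Pr[\exists (X,Y)\text{ bad}] \;\le\; \sum_{k=1}^{m/2}\exp\!\bigl(O(k\log m) - \Omega(k\tau^{2}pm)\bigr).$$
Under the hypothesis $p = \omega(\log m/(m\tau^{2}))$, i.e.\ $\tau^{2}pm \gg \log m$, each term is at most $\exp(-\omega(k\log m))$. The $k=1$ term dominates the geometric-like sum, giving total failure probability at most $m^{-\omega(1)}$, as required.

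The main subtlety is the tight accounting of parameters, not any deep idea: the slack $\tau pmk$ between $\E[Z]$ and the target $cpmk$ that we obtain from the hypothesis $|X|=|Y|\le m/2$ is exactly what is needed to exponentially overpower the union-bound penalty $\exp(O(k\log m))$, and the assumed lower bound on $p$ is sharp for precisely this trade-off.
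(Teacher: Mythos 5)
Your proposal is correct and follows essentially the same route as the paper: bound $e_G(X,Y)$ trivially (you use $|X||Y|$, the paper uses $|X|\cdot m/2$, but after invoking $|X|=|Y|\le m/2$ both yield $\E[e_{G_p}(X,Y)]\le pm|X|/2$), extract the slack $\tau pm|X|$ between the mean and the threshold $cpm|X|$, apply a Chernoff/Bernstein upper tail to get a per-pair failure probability of $\exp(-\Omega(\tau^2 pm|X|))$, and finish with a union bound over $\binom{m}{k}^2$ pairs summed over $k$, exactly where the hypothesis $\tau^2 pm \gg \log m$ is used. The only cosmetic difference is that the paper couples $e_{G_p}(X,Y)$ with a $\mathrm{Bin}(m|X|/2,p)$ variable and quotes the stated Chernoff form, while you work with the raw Bernstein inequality; this is a matter of bookkeeping, not of method.
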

\begin{proof}
Consider any subsets $X\subseteq A$ and $Y\subseteq B$ with $|X|=|Y| \leq m/2$. Since
$$e_G(X,Y) \leq |X||Y| \leq |X|\frac{m}{2},$$
we get that
$$\Pr\left[e_{G_{p}}(X,Y)\geq cpm|X|\right]  \leq\Pr\left[\Bin\left(\frac{m|X|}{2},p\right)\geq cpm|X|\right]
 \leq\exp(-\tau^{2}m|X|p/6),$$
where the first inequality follows from the fact that $e_{G_p}(X,Y)$ is a sum of at most $m|X|/2$ independent $\text{Bernoulli}(p)$ random variables, and the second inequality follows from Chernoff's bounds.

Let $B$ denote the event that there exist subsets $X\subseteq A$ and $Y \subseteq B$ with $|X|=|Y| \leq m/2$ and $e_{G_p}(X,Y) \geq cpm|X|$. Then, it follows by the union bound that 
\begin{align*}
\Pr[B] & \leq\sum_{x=1}^{m/2}{m \choose x}^{2}\exp(-\tau^{2}mxp/6)\leq\sum_{x=1}^{m/2}\left(\frac{em}{x}\right)^{2x}\exp(-\tau^{2}mxp/6) 
\leq\sum_{x=1}^{m/2}\exp(4x\log m-\tau^{2}mxp/6)\\
&\leq \sum_{x=1}^{m/2}\exp\left(x\log{m}(4-\omega(1))\right)
= m^{-\omega(1)},
\end{align*}
where the inequality on the second line holds since $pm = \omega(\log{m}/\tau^{2})$.
\end{proof}

Finally, the third result, which is a lemma from \cite{FLS}, shows that an almost regular bipartite graph with sufficiently large degrees contains an $r$-factor with $r$ close to its minimum degree. 

\begin{lemma}[Lemma 20 in \cite{FLS}]\label{lemma: almost regular contains regular}
Let $\rho \geq 1/2$, $m \in \mathbb{N}$, and $\xi = \xi(m) >0$. Suppose that $G=(A\cup B,E)$ is a bipartite graph with parts $A$ and $B$, both of size $m$, and $\rho m+\xi\leq \delta(G)\leq \Delta(G)\leq \rho m+\xi +\xi^2/m$. Then, $G$ contains a $\rho m$-factor.
\end{lemma}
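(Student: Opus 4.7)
The plan is to reduce the existence of a $\rho m$-factor in $G$ to a Hall-type cut condition via max-flow min-cut, and then verify that condition from the degree hypotheses. Consider the natural flow network with source $s$, sink $t$, arcs $s\to a$ of capacity $\rho m$ for each $a\in A$, arcs $b\to t$ of capacity $\rho m$ for each $b\in B$, and arcs $a\to b$ of capacity $1$ for each edge $(a,b)\in E(G)$. A $\rho m$-factor of $G$ corresponds exactly to an integral $s$-$t$ flow of value $\rho m\cdot m$ (flows in bipartite networks with integer capacities have integer optima), so by max-flow min-cut, $G$ has a $\rho m$-factor if and only if
\[
  e_G(X,Y) \;\geq\; \rho m\,\bigl(|X|+|Y|-m\bigr) \qquad \forall\, X\subseteq A,\; Y\subseteq B. \qquad (\star)
\]

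If $|X|+|Y|\leq m$ the right-hand side of $(\star)$ is non-positive and there is nothing to prove. Otherwise, set $s:=|X|+|Y|-m>0$, and combine two families of lower bounds on $e_G(X,Y)$. The \emph{combinatorial} bound uses only the min-degree hypothesis, via the fact that each $b\in Y$ has at least $d(b)-(m-|X|)$ neighbors in $X$:
\[
  e_G(X,Y) \;\geq\; |Y|\bigl(\rho m+\xi + |X| - m\bigr),
\]
and, symmetrically, $e_G(X,Y)\geq|X|\bigl(\rho m+\xi+|Y|-m\bigr)$. The \emph{near-regularity} bound exploits $\Delta(G)\leq\rho m+\xi+\xi^2/m$ through $e_G(A\setminus X,Y)\leq(m-|X|)\Delta(G)$, yielding
\[
  e_G(X,Y) \;\geq\; |Y|(\rho m+\xi)\;-\;(m-|X|)\bigl(\rho m+\xi+\tfrac{\xi^{2}}{m}\bigr),
\]
together with its analogue obtained by swapping $X$ and $Y$. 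A direct algebraic simplification reduces $(\star)$ under the combinatorial bound to $|Y|\xi\geq(|Y|-\rho m)(m-|X|)$, and under the near-regularity bound to $sm\geq(m-|X|)\xi$ (with symmetric variants in both cases).

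First I would dispatch the easy ranges: whenever $\min(|X|,|Y|)\leq\rho m$ or $\max(|X|,|Y|)=m$, one of the combinatorial inequalities holds trivially. In the remaining regime $\rho m<|X|,|Y|<m$ one may assume without loss of generality $|X|\leq|Y|$; if $s$ is large enough that $sm\geq(m-|Y|)\xi$ then the near-regularity bound dispatches the case. The main obstacle will be the \emph{pinched} regime in which both $|X|$ and $|Y|$ lie just above $\rho m$ and $s$ is only $1$ or $2$: here the combinatorial bound is nearly tight, and squeezing out $(\star)$ requires using the full $\xi^{2}/m$ slack on the maximum degree, together with the observation that integrality of degrees lets one assume $\xi\geq 1$ without loss of generality (if $\xi<1$, then $\delta(G)\geq\lceil\rho m+\xi\rceil=\rho m+1$ and $\Delta(G)<\rho m+1+1/m$, so the hypotheses persist with $\xi$ replaced by $1$).
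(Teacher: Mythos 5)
Your framework is right: the max-flow/Gale--Ryser reduction to the cut condition $(\star)$, the two families of lower bounds on $e_G(X,Y)$, and their algebraic reductions to $|Y|\xi\geq(|Y|-\rho m)(m-|X|)$ and $sm\geq(m-|X|)\xi$ (with symmetric variants) are all correct, as is the observation (ultimately unnecessary here) that integrality lets you take $\xi\geq 1$. But the case analysis is never actually finished. You flag the ``pinched regime'' and assert that it requires integrality plus the full $\xi^2/m$ slack, without verifying that those ingredients suffice; in fact, by themselves they do not. With $\xi\geq 1$ the combinatorial inequality only gives $|X|\xi\geq|X|\geq\rho m$, while the right-hand side $(|X|-\rho m)(m-|Y|)$ can be of order $m^2$ under the constraint $(|X|-\rho m)+(m-|Y|)\leq(1-\rho)m$, so integrality by itself closes nothing.

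What actually makes your two bounds cover each other's weak spots is a pair of elementary consequences of $\rho\geq 1/2$ and the WLOG $|X|\leq|Y|$ that you never invoke. Set $u=|X|-\rho m$ and $v=m-|Y|$. From $u+v=(|X|-|Y|)+(1-\rho)m\leq(1-\rho)m$ one gets $s=\rho m+u-v\geq(2\rho-1)m+2u\geq 2u$; and from $\rho\geq 1/2$ one gets $v<m\leq 2\rho m\leq 2|X|$, i.e. $uv/|X|<2u$. Granting these (and $u,v\geq 1$, else the combinatorial variant is trivial), one of your two reduced inequalities holds for every $\xi>0$, with no dichotomy left over: if $\xi\leq s$ then $v\xi<m\xi\leq ms$, which is the near-regularity variant $sm\geq(m-|Y|)\xi$; and if $\xi>s$ then $uv/|X|<2u\leq s<\xi$, which is the combinatorial variant $|X|\xi\geq(|X|-\rho m)(m-|Y|)$. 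Your skeleton is correct, but the decisive step is absent, and the hint you offer for how to close it points in the wrong direction.
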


We are now ready to prove \cref{lemma: G contains a resilient subgraph}.
\begin{proof}[Proof of \cref{lemma: G contains a resilient subgraph}]
Consider a random partitioning of $V(G)$ with parts $A,B$ of size $m:=|A|=|B|=n/2$, and let $G'=(A\cup B,E')$ denote the induced bipartite subgraph between $A$ and $B$. For any $v\in V(G)$, by linearity of expectation 
$$\E[\deg_{G'}(v)] = \frac{d}{2} + \frac{d}{2(n-1)}.$$
Since the distribution of the random subset $A\subseteq V(G)$ coincides with the distribution on subsets of $V(G)$ obtained by including every vertex independently with probability $1/2$, conditioned on the event (of probability $\Theta(1/\sqrt{n})$) that exactly $n/2$ elements are included, it follows from Hoeffding's inequality that for any fixed vertex $v\in V(G)$ the probability that $|\deg_{G'}(v) - \E[\deg_{G'}(v)]| \geq 4\sqrt{d\log{n}}$ is at most $O(\sqrt{n}\exp(-16d\log{n}/d))\ll 1/n$. Therefore, taking the union bound over all vertices, it follows that with high probability,    
$$d/2-5\sqrt{d\log n}\leq \delta(G')\leq \Delta(G')\leq d/2+5\sqrt{d\log n}.$$
Fix any such $G'$. 

We now wish to apply \cref{lemma: almost regular contains regular} to $G'$. For the choice of parameters, note that since $d \geq n/2 + \varepsilon n$, we can find $\rho \geq 1/2$ such that $\rho m = d/2 - \varepsilon m/1000$. If we take $\xi = \varepsilon m/1000 - 5\sqrt{d\log{n}}$, then since $\varepsilon^{4} \gg \log{n}/n$, we have $\xi \geq \varepsilon m/2000$ for all $n$ sufficiently large, and hence $\xi^{2}/m \geq O(\varepsilon^{2} m) \gg \sqrt{n\log{n}}$. This shows that 
$$\rho m+\xi\leq \delta(G')\leq \Delta(G')\leq \rho m+\xi+\xi^2/m,$$
Therefore, \cref{lemma: almost regular contains regular} applied  to $G'$ shows that $G'$ contains a $\rho m=d/2 - \varepsilon m/1000$-factor. Note also that $\delta(G')\geq \alpha m$, with $\alpha = \rho + (\xi/m) \geq 1/2 + \tau$, where $\tau = \varepsilon/1000$. 

We will now apply \cref{thm:random contains factor} and \cref{lemma:expandingneighbourhood} to $G'$ in order to extract a sparse `good' subgraph. 
Let $p=\omega(\log m/(m\varepsilon^{3}))$ and consider the random graph $G'_p$. Since $G'$ satisfies the hypothesis of \cref{thm:random contains factor} with the parameters $\rho$, $\alpha$ and $\tau$ as above, it follows that with high probability, $G'_p$ contains an $r=(1-\tau)\rho mp = (1-\tau)(1-\varepsilon m /500d)dp/2$-factor $H$. In particular, note that $(1-\varepsilon/100)dp/2\leq (1-\tau)(1-\varepsilon /500)dp/2\leq r\leq (1-\tau)dp/2 = (1-\varepsilon/1000)dp/2$ as desired. Moreover, applying \cref{lemma:expandingneighbourhood} to $G'$ with $p$ and $\tau$ as above, we see that with high probability, for any subsets $X\subseteq A$ and $Y\subseteq B$, $e_{G'_p}(X,Y)< (1/2 + \varepsilon/1000)pm|X|$. Fix any such realization of $G'_p$. We will show that $H$ is $(\varepsilon/10,r,n/2)$-good.   


Suppose this is not the case. Then, by definition, there must exist a balanced bipartite subgraph $H'=(A'\cup B',E')\subseteq H\subseteq G'_{p}$, with parts $A',B'$ of size $|A'|=|B'|\geq (1-\varepsilon/10)n/2$ and with $\delta(H') \geq (1-\varepsilon/5)r$, which does not contain a perfect matching. Therefore, by Hall's marriage theorem (see, e.g., \cite{SS} for the version used here), there must exist subsets $X'\subseteq A'$ and $Y'\subseteq B'$ with $|X'|=|Y'|\leq {n/4}$ such that at least one of the following is true: $$N_{H'}(X')\subseteq Y'\text{, or } N_{H'}(Y')\subseteq X'.$$ In either case, we get from the minimum degree assumption on $H'$ that $$(1-\varepsilon/5)r|X'| \leq e_{H'}(X',Y') \leq e_{G'_p}(X',Y').$$
Thus, we get \begin{align*}
e_{G'_{p}}(X',Y') & \geq\left(1-\frac{\varepsilon}{5}\right)r|X'| \geq\left(1-\frac{\varepsilon}{5}\right)\left(1-\frac{\varepsilon}{100}\right)\frac{dp}{2}|X'| \geq\left(1-\frac{\varepsilon}{4}\right)\left(\frac{1}{2}+\varepsilon\right)\frac{np}{2}|X'|\\
&\geq\left(\frac{1}{2}+\frac{3\varepsilon}{4}\right)\frac{np}{2}|X'|.
\end{align*}
However, since $G'_p$ satisfies the conclusion of \cref{lemma:expandingneighbourhood}, we must also have $e_{G'_p}(X',Y') \leq (1/2 + \varepsilon/1000)pm|X'| = (1/2 + \varepsilon/1000)np|X'|/2$, which leads to a contradiction.     
\end{proof}

\subsection{Random partitioning}

The following technical lemma allows us to partition our graph $G$ into a number of smaller subgraphs, each of which contains many `almost' $1$-factorizations. 
Its proof is similar to Lemma 27 in \cite{FLS}, but we need here a different set of parameters.   

\begin{lemma}\label{partition lemma}
Let $n$ be a sufficiently large integer and let $K$ be an integer in $[\log^2{n}, n^{1/300}]$. Let $\tau > 0$ be such that $\tau > 100/K$. 
    Suppose that $G$ is a $d$-regular graph on $n$ vertices with
    $d \geq n/2$. 
    Then, there
	are $K^3$ edge-disjoint spanning subgraphs
	$H_1,\ldots ,H_{K^3}$ of $G$ with the following properties:
		\begin{enumerate}
			\item For each $H_i$, there is a partition $V(G) = U_i \cup W_i$ with
			$|W_i|  =  n/K^2 \pm (n/K^2)^{2/3}$ and even;
            			\item Letting $F_i = H_i[W_i]$, we have $\delta (F_i)
			\geq (d/n-\tau)|W_i| $;
			\item Letting $E_i = H_i[U_i, W_i]$, we have
			$e _{E_i} (u,W_i) \geq |W_{i}|/10K^3$ for all $u \in U_i$;
\item Letting $D_i = H_i[U_i]$ for all $i$, then for some
			$d/K^{3} \geq r \geq (1-\tau )d/K^3$, we have
			$$r \leq  \delta(D_i)
			\leq \Delta(D_i)\leq r +r^{4/5}.$$

		\end{enumerate}
\end{lemma}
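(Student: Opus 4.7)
My plan is to construct the $K^3$ subgraphs by a two-stage random construction: first fix the vertex sets $(W_i,U_i)$ independently for each $i$, then randomly partition the edges of $G$ among the $H_i$'s, and finally verify properties 1--4 simultaneously by standard concentration.

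For the vertex partition, I would pick $W_1,\ldots,W_{K^3}$ as independent uniformly random subsets of $V(G)$ of size $n/K^2$, adjusting each by at most one or two vertices to ensure evenness (absorbed by the slack $(n/K^2)^{2/3}$ in property~1). For a fixed vertex $v$, the number of its $G$-neighbors lying in $W_i$ is hypergeometric with mean $d|W_i|/n$, and by Hoeffding's inequality it deviates from its mean by more than $\tau|W_i|/4$ with probability at most $\exp(-\Omega(\tau^{2}|W_i|))$. Since $\tau > 100/K$ and $K \leq n^{1/300}$, one has $\tau^{2}|W_i| \geq 10^{4}n/K^{4} = \omega(\log n)$, so each such tail is $n^{-\omega(1)}$. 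A union bound over the $nK^{3} \leq n^{1.01}$ (vertex, subgraph) pairs then yields, with high probability, the ``raw supply'' needed for properties~2 and~3: every vertex in $W_i$ has $(d/n \pm \tau/4)|W_i|$ neighbors in $W_i$, and every vertex in $U_i$ also has $(d/n \pm \tau/4)|W_i|$ neighbors in $W_i$.

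For the edge partition, the goal is to route essentially all of $G[W_i]$ into $H_i$ (for property~2), an $\Omega(1/K^{3})$ fraction of each vertex's $W_i$-edges into $H_i$ (for property~3), and a $(1 \pm o(\tau))/K^{3}$ fraction of each vertex's $U_i$-internal edges into $H_i$ (for property~4). I would run through the edges in stages: first, every ``internal'' edge $uv$ with $\{u,v\} \subseteq W_i$ for a \emph{unique} index $i$ is assigned to $H_i$. Since a fixed pair of vertices lies in $W_i \cap W_j$ for distinct $i \neq j$ with probability $O(K^{-8})$ per pair of indices, Chernoff shows that the number of conflicts incident to any fixed vertex is $o(\tau|W_i|)$ whp, and conflicted edges may be assigned arbitrarily without harming property~2 beyond a $(1-o(\tau))$ factor. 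The cross edges (those with one endpoint in $W_i$ and the other in $U_i$) are then distributed uniformly among the eligible $H_i$'s, and the edges that happen to be ``outside'' every $W_i$ are distributed independently and uniformly among all $K^{3}$ subgraphs. Chernoff bounds applied to the resulting binomial counts give property~3 for every $u \in U_i$ and property~4 with $r = (1-o(\tau))d/K^{3}$; the fluctuation is $O(\sqrt{(d/K^{3})\log n})$, which is easily smaller than $r^{4/5}$ in our regime because $d/K^{3} \geq n^{1-1/100}$ so $\sqrt{(d/K^{3})\log n} \ll (d/K^{3})^{4/5}$.

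The main obstacle is handling the coupling between the three kinds of edges across the $K^{3}$ subgraphs: a single edge may play different roles (internal/cross/outside) for different indices $i$, yet the construction must remain edge-disjoint while meeting all four properties for every $i$. The key observation that makes the two-stage scheme work is that, in our parameter regime, every quantity we need to control is a sum of at most $d$ weakly dependent indicators with suitably small variance, so each Chernoff/Hoeffding tail is of order $n^{-\omega(1)}$, and a union bound over the $O(nK^{3}) = O(n^{1.01})$ constraints yields existence of a partition satisfying all four properties simultaneously. This mirrors the structure of Lemma~27 in \cite{FLS}, with the parameters retuned (in particular $K \in [\log^{2}n,n^{1/300}]$ and $\tau > 100/K$) so that the error term $r^{4/5}$ in property~4 dominates the Chernoff fluctuations.
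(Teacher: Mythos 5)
Your overall plan — fix the vertex partitions $\{U_i,W_i\}$ randomly, then randomly partition $E(G)$ among the $H_i$, and verify everything by concentration plus a union bound — is exactly the structure of the paper's proof (the paper picks for each vertex an i.i.d.\ $K$-subset $S(v)\subseteq[K^3]$ and sets $W_i=\{v:i\in S(v)\}$, while you pick the $W_i$ independently of fixed size, which is a harmless variant). However, your edge-partition rule contains a concrete error. You propose to give each ``cross edge'' a uniformly random \emph{eligible} index, where eligibility for index $i$ means the edge has exactly one endpoint in $W_i$. But then any edge $\{u,w\}$ with $u,w\in U_i$ is \emph{never} eligible for index $i$, so it can only land in $H_i$ if it is ``outside'' every $W_j$ — an event of probability roughly $(1-1/K^2)^{2K^3}\approx e^{-2K}$, which is negligible. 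As a result $D_i=H_i[U_i]$ would be essentially empty, directly contradicting your own claim that $\delta(D_i)\approx d/K^3$ and hence property~4. The fix, which is what the paper does, is to assign \emph{every} non-internal edge (both ``cross'' and ``outside'' for any given index) a uniformly random label $k(e)\in[K^3]$, independent of which partition it straddles; then $D_i$ receives a $\Theta(1/K^3)$ fraction of the edges inside $U_i$ and the claimed concentration goes through.

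Two smaller issues: (i) your claim that the number of conflicted edges incident to a fixed vertex is $o(\tau|W_i|)$ needs to be read carefully — the relevant quantity is, for a \emph{fixed} $i$ with $v\in W_i$, the number of neighbors $u\in W_i$ for which $\{u,v\}\subseteq W_j$ for some $j\ne i$; this is $\Theta(d/K^3)$, which is a constant fraction (about $1/100$) of $\tau|W_i|$ when $\tau$ is near its lower limit $100/K$, not $o(\cdot)$; the bound still suffices but the constant matters. (ii) The indicators you are summing (``edge $\{u,v\}$ is internal to some $W_j$'', ``edge $\{u,v\}$ is stolen by another part'') are not independent over $u$ because they share the membership randomness of $v$; one should condition on $v$'s memberships (as the paper does explicitly via the quantities $Y(v)$ and $Z_i(v)$) before invoking Hoeffding/Chernoff.
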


\begin{proof}
First, let $\{S(v)\}_{v\in V(G)}$ be i.i.d random variables, where for each $v\in V(G)$, $S(v)\subseteq [K^3]$ is a subset of size exactly $K$, chosen uniformly at random from among all such subsets. For each $i \in [K^3]$, let $$W_i := \{v\in V(G) : i\in S(v)\};$$
note that the graphs $G[W_i]$ are not necessarily edge-disjoint. 

Second, let $s:= n/K^2$. Since any $i\in [K^{3}]$ is included in $S(v)$ with probability $1/K^{2}$, independently for different $v\in V(G)$, it follows by Chernoff's bounds that the following holds for all $v \in V(G)$ and $i \in [K^3]$ with probability $1-o(1)$: 
\begin{enumerate}[$(a)$]
\item $|W_i| = s(1\pm s^{-1/3})$; 
\item $e_G(v, W_i) = \frac{d}{K^2}(1\pm s^{-1/3})$. 
\end{enumerate}
Next, for each $v \in V(G)$, we define the random variable $Y(v)$ to be the number of vertices $u \in N_G(v)$ with $\{u,v\} \subset W_i$ for some $i\in [K^3]$. For each $v \in V(G)$ and $i \in [K^3]$, we define the random variable $Z_i(v)$ to be the number of vertices $u \in N_G(v)$ such that $u \in W_i$ and $\{u,v\} \subset W_j$ for some $j\in [K^3]$. Since all vertices of $G$ have the same degree, the values of $\mathbb{E}[Y(v)]$ and $\E[Z_i(v)]$ are the same for all choices of $v$ and $i$. Let us denote these common values by $Y$ and $Z$, respectively. 

We claim that $Y \leq d/K$ and $d/2K^{3} \leq Z \leq 2d/K^3$ for all sufficiently large $K$. To see this, note that $Y=\E[Y(v)] = \E[\E[Y(v)|S(v)]]$, and conditioned on any realization of $S(v)$, 
$$\E[Y(v)|S(v)] \leq \sum_{u\in N_{G}(v)}\E_{S(u)}[|S(u)\cap S(v)|] = d\E_{S(u)}[|S(u)\cap S(v)|] = d/K.$$ 
Similarly, $Z = \E[Z_1(v)] = \E[\E[Z_1(v)|S(v)]]$. Conditioned on any realization of $S(v)$ for which $1\notin S(v)$, for all sufficiently large $K$,
$$\E[Z_1(v)|S(v)] = \sum_{u\in N_{G}(v)}\Pr_{S(u)}[\{|S(u)\cap S(v)| > 0 \}\cap \{1\in S(u)\}] \in [0.9d/K^{3}, 1.1d/K^{3}],$$
whereas conditioned on any realization of $S(v)$ for which $1 \in S(v)$, 
$$\E[Z_1(v)|S(v)] = d\Pr_{S(u)}[\{|S(u)\cap S(v)| > 0\} \cap \{1\in S(u)\}] = d/K^{2}.$$ 
Since $\Pr[1\in S(v)] = 1/K^{2}$, the desired conclusion follows from the law of total probability. 
Also, by Hoeffding's inequality, it follows that with probability $1-o(1)$, for all $v \in V(G)$ and $i \in [K^3]$, 
\begin{enumerate}[$(a)$]
\setcounter{enumi}{2}
\item $Y(v) = Y \pm 2\sqrt{n \log{n}}$ ;
\item $Z_i(v) = Z \pm 2\sqrt{n \log{n}}$.
\end{enumerate} 
Therefore, there exists a collection $W_1,\dots, W_{K^3}$ satisfying $(a),(b),(c)$ and $(d)$ simultaneously. Moreover, after removing at most one vertex from each $W_i$, we may further assume that $|W_i|$ is even for all $i \in [K^3]$.  Fix any such collection. Let $U_i:= V(G)\setminus W_i$ and 
let $G'=(V(G),E(G'))$, where $E(G'):=E(G)\setminus \cup_{i\in[K^3]} E[W_i]$.  

To each edge $e\in \cup_{i\in[K^3]}E[W_i]$, assign an arbitrary $k(e) \in [K^3]$ such that $e \subset W_{k(e)}$. Further, assign independently to each edge $e \in E(G')$, a uniformly chosen element $k(e) \in [K^3]$. For each $i\in [K^3]$, let $H_i=(V(G), E(H_i))$, where $E(H_i):=\{e\in E(G) : k(e)=i\}$. We claim that with probability at least $1 - 2n^{-8}$, $H_1,\dots,H_{K^3}$ satisfy the conclusions of the lemma. 

Conclusion $1.$ follows immediately from property $(a)$. For conclusion $2.$, note that the only edges $\{u,v\}\subset W_i$ which are present in $G[W_i]$ but possibly not in $H_i[W_i]$ are those which are also present in $G[W_j]$ for some other $j\in [K^{3}]$. Since for given $i\in [K^3]$ and $v\in V(G)$, $Z_i(v)$ bounds the number of such edges incident to $v$, it follows from properties $(b)$ and $(d)$ that for any $i \in [K^3]$ and any $v\in W_i$:
$$\deg_{F_i}(v) \geq e_G(v, W_i) - Z_i(v)\geq  e_G(v,W_i) - \frac{3d}{K^3} \geq \frac{d}{K^2} - \frac{4d}{K^3}.$$ 
Therefore, property $(a)$ shows that $$\deg_{F_i}(v) \geq \frac{d}{n}|W_i| - \frac{4d}{K^3} - \frac{d}{n}\frac{n^{2/3}}{K^{4/3}} \geq \left(\frac{d}{n} - \frac{5}{K}\right)|W_i|.$$

We now verify that conclusions $3.$ and $4.$ are satisfied with the desired probability. Property $(c)$ shows that for all $v \in V(G)$,  
$$\deg_{G'}(v) = \deg_G(v) - Y(v) = d-Y \pm 2\sqrt{n \log{n}}.$$ 
Moreover, properties $(b)$ and $(d)$ show that for all $i\in [K^3]$ and for all $u \in U_i$, 
$$e_{G'}(u,W_i) = e_G(u,W_i) - Z_i(u) = \frac{d}{K^2}(1\pm s^{-1/3}) - Z \pm 2\sqrt{n \log{n}}.$$
Therefore, since each edge $e$ in $G'$ chooses a label $k(e)\in [K^3]$ independently and uniformly, it follows by Chernoff's inequality that for all $v \in V(G)$, $i \in [K^3]$ and $u \in U_i$, 
\begin{align*}
\deg_{H_{i}}(v) & =\frac{\deg_{G'}(v)}{K^{3}}\pm2\sqrt{n\log n}= \frac{d-Y}{K^3} \pm 4\sqrt{n\log{n}}\\
e_{H_{i}}(u,W_{i}) & =\frac{e_{G'}(u,W_{i})}{K^{3}}\pm2\sqrt{n\log n}= \frac{d}{K^5}\left(1-\frac{ZK^2}{d}\right) \pm n^{2/3},
\end{align*}
except with probability at most (say) $2n^{-8}$. Whenever this holds, we also get that for all $i\in [K^3]$ and $u \in U_i$, 
$$ \deg_{D_i}(u) = \deg_{H_i}(u) - e_{H_i}(u,W_i) = \frac{d-Y}{K^3} - \frac{d}{K^5}\left(1-\frac{ZK^2}{d}\right) \pm 2n^{2/3}.$$
This implies that for all $i\in [K^3]$ and $u\in U_i$,
\begin{align*}
e_{E_i}(u,W_i) 
& \geq \frac{d}{K^5} - \frac{Z}{K^{3}} - n^{2/3} 
\geq \frac{n}{2K^5} - \frac{2d}{K^6} - n^{2/3}
\geq \frac{n}{2K^5} - \frac{2n}{K^6} - n^{2/3} \geq \frac{|W_i|}{3K^{3}} - \frac{2|W_i|}{K^4} - n^{2/3}\\
& \geq \frac{|W_i|}{10K^3},
\end{align*}
where the last line uses $|W_i|/K^{3} \gg |W_i|/K^{4} \gg n^{2/3}$. We also have for all $i\in [K^3]$ that
\begin{align*}
\delta(D_i) 
&\geq \frac{d}{K^3} - \frac{Y}{K^3} - \frac{d}{K^5} + \frac{Z}{K^3} - 2n^{2/3} \geq \frac{d}{K^3} - \frac{d}{K^4} - \frac{d}{K^5} + \frac{d}{2K^6} - 2n^{2/3}\\
&\geq \left(1-\frac{5}{K}\right)\frac{d}{K^3},
\end{align*}
and
\begin{align*}
\Delta(D_i)
&\leq \frac{d}{K^3} - \frac{Y}{K^3} - \frac{d}{K^5} + \frac{Z}{K^3} + 2n^{2/3} \leq \delta(D_i) + 4n^{2/3}\\
&\leq \delta(D_i) + \delta(D_i)^{3/4}.
\end{align*}
Therefore, we may take $(1-\tau)d/K^{3} \leq r \leq d/K^{3}$ in conclusion 4. This completes the proof. 
\end{proof}

\begin{remark}
\label{rmk:counting partitions}
The proof of \cref{partition lemma} given above actually shows that if we fix any collection $W_1,\dots,W_{K^{3}}$ satisfying properties $(a),(b),(c)$ and $(d)$ in the proof, then there are at least $$\left(1-2n^{-8}\right)(K^{3})^{\left(1-\frac{10}{K}\right)\frac{nd}{2}}$$
collections $H_1,\dots,H_{K^3}$ satisfying the conclusions of the lemma with respect to this choice of $\{U_i,W_i\}_{i=1}^{K^3}$. This may be seen as follows: since we have seen that, given $W_1,\dots,W_{K^3}$ satisfying properties $(a),(b),(c)$ and $(d)$, the random process to produce $H_1,\dots,H_{K^{3}}$ with the desired properties succeeds with probability at least $1-2n^{-8}$, it suffices to show that the number of outcomes of this random process is at least $(K^{3})^{\left(1-\frac{10}{K}\right)\frac{nd}{2}}$. But this is immediate since 
$$|E(G')| \geq |E(G)| - \sum_{i=1}^{K^3}\frac{|W_i|^{2}}{2} \geq \frac{dn}{2} - K^{3}\cdot \frac{n^2}{K^4} \geq \frac{dn}{2}\left(1-\frac{10}{K}\right),$$ 
and each edge $e\in E(G')$ chooses one of $K^3$ labels. The fact that all these collections satisfy the conclusion of the lemma with respect to the \emph{same fixed} choice of $\{U_i,W_i\}_{i=1}^{K^3}$ will be used crucially in the proof of \cref{main}.    
\end{remark}

\subsection{Almost regular graphs contain many equitable collections of edge-disjoint large matchings}
The following proposition shows that an almost regular graph contains the `correct' number of collections of large matchings such that every collection is equitable in the sense that each vertex is left uncovered by only a small number of the matchings. The proof of this proposition follows from the proof of the main result in the work of Dubhashi, Grable, and Panconesi \cite{DGP}. For completeness, we include the details in \cref{appendix:nibbling}, after the proof of our main result.   
\begin{proposition} 
\label{prop:nibbling-count}
Let $n$ be a sufficiently large even integer, and let $G$ be a graph on $n$ vertices and $m$ edges with 
$\delta:=\delta(G)\leq\Delta(G)=:\Delta$
such that $\delta \geq n^{1/10}$ and $\Delta - \delta \leq \Delta^{5/6}$. There exists a universal constant $J > 0$ for which the following holds. There are at least $\left(\frac{(1-n^{-1/40J})\delta}{e^2}\right)^{m}$ distinct collections of edge-disjoint matchings $\mathcal{M}=\{M_1,\dots,M_\delta\}$ of $G$ such that: 
\begin{enumerate}
\item Each matching $M_i$ covers at least $\left(1-n^{-1/10J}\right)n$ vertices; 
\item Each vertex is uncovered by at most $\delta\cdot n^{-1/10J}$ matchings in $\mathcal{M}$.
\end{enumerate}
\end{proposition}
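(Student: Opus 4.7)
The plan is to build $\mathcal{M} = \{M_1, \ldots, M_\delta\}$ iteratively using a R\"odl-nibble style procedure, following Dubhashi--Grable--Panconesi (DGP), and to extract the counting lower bound by tracking the entropy available at each step.

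\emph{Setup.} Set $G_1 := G$ and at step $t \leq \delta$ let $G_t := G \setminus (M_1 \cup \cdots \cup M_{t-1})$ be the remainder. I would select $M_t$ by running the semi-random matching algorithm on $G_t$: include each edge of $G_t$ independently with a small probability $p/d_t$ and then delete all edges incident to any vertex chosen more than once. Standard DGP-style concentration (via Azuma/Talagrand on appropriate martingales) shows that, for $t \leq (1-o(1))\delta$, one has $(1-o(1))(\delta-t+1) \leq \delta(G_t) \leq \Delta(G_t) \leq (1+o(1))(\delta-t+1)$, and each vertex is left uncovered at step $t$ with probability only $O(1/d_t)$, controlled so that properties (1) and (2) hold simultaneously for all vertices with high probability.

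\emph{Counting.} For the lower bound on the number of valid $\mathcal{M}$, I would observe that at step $t$, conditioned on the history, the number of near-perfect matchings in $G_t$ that the procedure could plausibly output is at least $\bigl((1-o(1))d_t/e\bigr)^{n/2}$. This count is a Br\'egman/Egorychev/Schrijver-type lower bound on the number of (near-)perfect matchings in an almost $d_t$-regular graph (obtained by passing to a bipartite double cover, or via Csikv\'ari's extension of Schrijver's permanent bound to general graphs). Taking the product over $t = 1, \ldots, \delta$ and using Stirling together with $m \approx n\delta/2$ yields
\[
\prod_{t=1}^{\delta}\left(\frac{(1-o(1))d_t}{e}\right)^{n/2} \;\geq\; \left(\frac{(1-o(1))\delta!}{e^\delta}\right)^{n/2} \;\geq\; \left(\frac{(1-o(1))\delta}{e^2}\right)^{m},
\]
which is the target bound up to the precise error term.

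\emph{Obstacles.} The main difficulty is reading off the quantitative error $n^{-1/40J}$ from the DGP argument. Three error sources accumulate: (a) the per-step relative error in the degree concentration, which compounds over $\delta$ steps; (b) the per-step error in the matching-count lower bound, which compounds multiplicatively; and (c) the cleanup during the final $o(\delta)$ rounds when $G_t$ is too sparse for the nibble to be effective. For (c), I would fall back on Vizing's theorem applied to the sparse remainder, packaging the leftover edges into the already-constructed matchings without degrading properties (1) and (2) by more than a polynomial factor. The exponent $1/40J$ (versus $1/10J$ in the coverage guarantees) arises because the $n^{-1/10J}$ slack in coverage must be squared/multiplied against the related concentration slack when translating into the multiplicative loss in the final entropy; pinning this factor down precisely is the most delicate part of the proof.
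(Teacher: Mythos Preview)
Your proposal diverges from the paper's argument, and the divergence exposes a genuine gap. You build matchings one at a time and invoke a Schrijver/Csikv\'ari-type permanent bound to count the near-perfect matchings available at step $t$. But that bound counts \emph{all} near-perfect matchings of $G_t$, whereas the iteration only works for matchings whose uncovered set is ``spread out'' enough to keep $G_{t+1}$ almost regular and to keep every vertex's running miss-count under control (this is exactly what is needed for conclusion 2). There is no reason the Schrijver-counted matchings satisfy this, so the count and the property-preservation are decoupled and the product $\prod_t (d_t/e)^{n/2}$ is not a lower bound on the number of \emph{good} sequences. (The bipartite double cover suggestion is a separate problem: perfect matchings in the double cover correspond to permutations $\sigma$ with $v\sigma(v)\in E$, not to matchings in $G$.) Your reference to ``DGP-style concentration'' cannot rescue this, since DGP analyzes an edge-\emph{coloring} nibble, not a one-matching-at-a-time process; the concentration there is for palette sizes and color-degrees, not for the degree sequence after greedily stripping off arbitrary near-perfect matchings.

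The paper sidesteps the decoupling entirely. It runs the DGP edge-coloring nibble (all $\delta$ colors simultaneously, over $t_\tau \approx \tau^{-1}\log(1/\tau)$ rounds rather than $\delta$ rounds), and counts \emph{outcomes of the random process} rather than matchings. Concretely, it views the execution as a branching process, calls a leaf \emph{good} if the DGP concentration bounds hold at every stage, upper-bounds the probability $Q$ of reaching any fixed good leaf, and upper-bounds the number $R$ of leaves that can yield the same partial coloring; the number of good partial colorings is then at least $(1-o(1))/(QR)$. Because ``good'' is defined via the process, every counted coloring automatically satisfies conclusions 1 and 2, and the explicit computation of $Q$ and $R$ is what delivers the polynomial error $n^{-1/40J}$. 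No permanent lower bound is used anywhere.
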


\subsection{Extending edge disjoint matchings to edge disjoint perfect matchings}
In this subsection, we show how to complete a collection of edge-disjoint matchings of $H[U_i]$ into a collection of edge-disjoint perfect matchings of $G$, using the sets $W_i$. 

\begin{lemma}
\label{lemma:extend-to-pm}
Let $n$ be a sufficiently large even integer. Let $K$ be an integer in $[\log^{10}{n}, n^{1/300}]$ and let $H$ be a graph on $n$ vertices for which: 
\begin{enumerate}
\item $V(H)=U\cup W$ with $|W|$ even;
\item $|W|=\frac{n}{K^2} \pm (\frac{n}{K^2})^{2/3}$; 
\item $\delta(H[W])\geq (1/2+\tau/2)|W|$ with $\tau > 100/K$;
\item Every vertex $u\in U$ has at least $\frac{n}{K^6}$ edges into $W$.
\end{enumerate}
Let $\mathcal M$ be a collection of $t\leq \frac{10n}{K^3}$ edge-disjoint matchings of $H[U]$ such that:
\begin{enumerate}[\indent(a)]
\item Every matching in $\mathcal M$ covers at least $|U|-\frac{n}{K^{10}}$ vertices of $U$;
\item Every vertex $u\in U$ is uncovered by at most $n/K^{10}$ matchings in $\mathcal M$. 
\end{enumerate}
Then, $\mathcal M$ can be extended to a collection of $t$ edge-disjoint perfect matchings of $H$.
\end{lemma}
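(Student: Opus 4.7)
For each $M_i \in \mathcal{M}$ let $U_i := U \setminus V(M_i)$ be the set of vertices of $U$ uncovered by $M_i$. By (a) we have $|U_i| \leq n/K^{10}$, and since $|U|=n-|W|$ is even, $|U_i|$ is even as well. We will extend each $M_i$ to a perfect matching $M^\star_i$ of $H$ by adjoining a matching $N_i \subseteq H[U,W]$ that saturates $U_i$ and lands on some $W_i \subseteq W$ (so $|W_i|=|U_i|$), together with a perfect matching $P_i$ of $H[W \setminus W_i]$. We process $i=1,2,\ldots,t$ sequentially. Because $E(M_i) \subseteq H[U]$, $E(N_i) \subseteq H[U,W]$ and $E(P_i) \subseteq H[W]$ live in pairwise disjoint parts of $H$, the only edge-disjointness that has to be maintained by hand is among the $N_i$'s and, separately, among the $P_i$'s.

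To build $N_i$, we process the vertices of $U_i$ in arbitrary order and, for each $u \in U_i$, greedily pick a neighbor $w \in W$ of $u$ in $H$ such that $uw$ has not been used by any $N_j$ with $j<i$ and $w$ is not yet matched within $N_i$. Condition~4 gives $u$ at least $n/K^6$ neighbors in $W$. Condition~(b) says $u$ lies in at most $n/K^{10}$ of the sets $U_j$ across all $j$, so at most $n/K^{10}$ edges at $u$ have been consumed by previous $N_j$'s; and at most $|U_i|-1 \leq n/K^{10}$ vertices of $W$ are already occupied within the current $N_i$. Hence at least $n/K^6 - 2n/K^{10} > 0$ valid choices of $w$ remain, so $N_i$ is completed, and we set $W_i := V(N_i) \cap W$ (which has even size, as required).

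For $P_i$, define $G_i := \bigl(H[W] - \bigcup_{j<i} E(P_j)\bigr)[W \setminus W_i]$ and seek a perfect matching in $G_i$. Since $|W| \geq n/(2K^2)$, we have $t \leq 10n/K^3 \leq 20|W|/K$, and combined with $\tau > 100/K$ this gives that the minimum degree in $H[W] - \bigcup_{j<i} E(P_j)$ is at least $(1/2+\tau/2)|W| - t \geq (1/2 + 30/K)|W|$. Restricting to $W \setminus W_i$ removes at most $|W_i| \leq n/K^{10} \ll |W|/K$ neighbors at each vertex, so $\delta(G_i) > |W \setminus W_i|/2$. Because $|W \setminus W_i|$ is even, Dirac's theorem produces a Hamilton cycle in $G_i$, whose alternate edges yield the desired $P_i$. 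Then $M^\star_i := M_i \cup N_i \cup P_i$ is a perfect matching of $H$ extending $M_i$.

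The only mild obstacle is that across $t = \Theta(n/K^3)$ iterations both the per-vertex $N_i$-budget in $H[U,W]$ and the per-vertex $P_i$-budget in $H[W]$ are eroded; however the comfortable slacks $n/K^6 \gg n/K^{10}$ in condition~4 and $\tau|W| \gg t$ from condition~3 ensure that the greedy step for $N_i$ and the Dirac step for $P_i$ both succeed at every iteration.
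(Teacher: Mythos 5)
Your proof is correct and follows essentially the same strategy as the paper's: greedily matching the uncovered vertices of each $M_i$ into $W$ (with the budget $n/K^6 \gg n/K^{10}$ making the greedy step go through), then applying Dirac's theorem to the remaining even-sized subset of $W$ to close off a perfect matching, and observing that the slack $\tau|W| \gg t$ keeps the minimum degree comfortably above half throughout all $t$ iterations. The only cosmetic difference is that the paper deletes each completed $\overline{M_i}$ from the graph wholesale, whereas you track the consumed edges in $H[U,W]$ and $H[W]$ separately, but this is the same accounting.
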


\begin{proof} 
Let $\mathcal M:=\{M_1,\ldots, M_t\}$ be an enumeration of the matchings. For each $M_i$, let $C_i \subseteq U$ denote the set of vertices which are not covered by $M_i$. By assumption (a), we have $|C_i| \leq n/K^{10}$ for all $i \in [t]$. We now describe and analyze an iterative process to extend $M_1,\dots,M_t$ to edge-disjoint perfect matchings $\overline{M_1},\dots, \overline{M_t}$ of $H$. 

Let $H_1:=H$. For each $u \in C_{1}$, select a distinct vertex $w(u) \in W$ such that $\{u,w(u)\}$ is an edge in $H_1$. This is possible (and can be done greedily) since every $u \in C_{1}$ has at least $n/K^{6} > |C_1|$ edges into $W$ by assumption 4. Let $W_1 \subseteq W$ denote the set of vertices in $W$ which have not been matched to any vertex in $C_{1}$. Note that since $|W|$ and $|C_1|$ are even by assumption, $|W_1|$ is also even.  Moreover, $$|W_1| = |W| - |C_1| \geq |W|- \frac{n}{K^{10}} \geq |W|\left(1-\frac{1}{K^7}\right),$$ 
where the last inequality follows from assumption 2. and 
$$\delta(H_1[W_1]) \geq \delta(H_1[W]) - (|W|-|W_1|) \geq \left(\frac{1}{2}+\frac{\tau}{2}\right)|W| - \frac{|W|}{K^{7}} > \frac{1}{2}|W_1|,$$
where the second and third inequalities follow from assumption 3.
A classical theorem due to Dirac shows that any graph on $2k$ vertices with minimum degree at least $k$ contains a Hamilton cycle, and hence, a perfect matching; therefore, $H_1[W_1]$ contains a perfect matching $N_1$. Let $\overline{M_1}:= M_1 \cup N_1 \cup \{\{u,w(u)\}: u\in C_1\}$. It is clear that $\overline{M_1}$ is a perfect matching in $H_1$. Continue this process starting with $H_2$, where $H_2$ is the graph obtained from $H_1$ by deleting the edges of $\overline{M_1}$. 

To complete the proof, it suffices to show that the above procedure can be repeated $t$ times. For this, we simply need to observe two things. First, since each vertex $u \in U$ is uncovered by at most $n/K^{10}$ matchings $M_i$ by assumption (b), we need to use at most $n/K^{10}$ edges from $u$ into $W$ during this process; in particular, at any stage $i\in [t]$ during this process, every $u \in U$ has at least $n/K^{6} - n/K^{10} > |C_i|$ edges into $W$. Second, since $\delta(H_{i+1}[W]) \geq \delta(H_i[W]) - 1$ for all $i \in [t-1]$ and since $t \leq 10n/K^{3} \leq \tau |W|/4$, it follows that at any stage $i \in [t]$, $$\delta(H_i[W]) \geq \left(\frac{1}{2} + \frac{\tau}{2}\right)|W| - t \geq \left(\frac{1}{2} + \frac{\tau}{4}\right)|W|,$$ which is sufficient for the application of Dirac's theorem as above.       
\end{proof}

\begin{remark} 
\label{rmk:distinct-stays-distinct}
The above proof shows that if $\mathcal{M}=\{M_1,\dots,M_t\}$ and $\mathcal{M'}=\{M'_1,\dots,M'_t\}$ are distinct collections of $t$ edge-disjoint matchings in $H[U]$ satisfying the hypotheses of \cref{lemma:extend-to-pm}, then $\overline{\mathcal{M}}=\{\overline{M_1},\dots,\overline{M_t}\}$ and $\overline{\mathcal{M'}}=\{\overline{M'_1},\dots,\overline{M'_t}\}$ are distinct collections of $t$ edge-disjoint perfect matchings in $H$. This is because none of the edges in $\overline{M_i}\setminus M_i$ and $\overline{M'_i} \setminus M'_i$  are present in $H[U]$, so that $\overline{M_i}\cap H[U] = M_i$ and $\overline{M'_i}\cap H[U] = M'_i$.  
\end{remark}

\section{Proof of \cref{main}}

In this section we prove our main result, \cref{main}.

\begin{proof}[Proof of \cref{main}]
Let $C = 2000\max\{J,10\}$ where $J$ is the constant appearing in the statement of \cref{prop:nibbling-count}. Our proof consists of two stages. In Stage 1, we describe our algorithm for constructing $1$-factorizations. In Stage 2, we analyze this algorithm and show that it actually outputs the `correct' number of distinct $1$-factorizations.\\ 

{\bf Stage 1:} Our algorithm consists of the following five steps.
\begin{enumerate} [$\text{Step }1$]
\item Let $\varepsilon = n^{-1/C}$ and $p = \varepsilon^{2}$. Since $\varepsilon^{4} = n^{-4/C} \gg \log{n}/n$, $p = \varepsilon^{2} = \omega (\log{n}/\varepsilon^{3}n) \leq 1$, and $d\geq n/2 + \varepsilon n$, it follows from \cref{lemma: G contains a resilient subgraph} that there exists a spanning subgraph $H$ of $G$ which is $(\varepsilon/10, r_1, n/2)$-good, for some $r_1 = \Theta(dp)$. Later (in Step 5.), we will apply \cref{proposition-completion} with $H$ as the underlying good graph. For this, note that as required for this proposition, we indeed have that $n^{1/10} \ll r_1 = \Theta(dp) = \Theta(n\varepsilon^{2}) \ll n$  and $\log{n} \ll r_1 \alpha^{2} = \Theta(n\varepsilon^{4}) \ll n$.   


\item Let $G'$ be the graph obtained from $G$ by deleting all the edges of $H$. Then, $G'$ is $d':=(d-r_1)$-regular and crucially, $d'\geq n/2 + 3\varepsilon n/4$, since $r_1 = \Theta(dp) = \Theta(n\varepsilon^{2}) \ll \varepsilon n$. For $K=\lfloor \varepsilon^{-10} \rfloor$, fix any collection of $K^3$ subsets of $V(G')=V(G)$, denoted by $W_1,\dots, W_{K^3}$, satisfying properties $(a),(b),(c)$ and $(d)$ as in the proof of \cref{partition lemma} (which is applicable since $K\in [\log^{2}{n},n^{1/300}]$ and $d' \geq n/2$). For $i\in [K^3]$, let $U_i:= V(G')\setminus W_i$. 

\item Let $H_1,\dots,H_{K^3}$ be edge-disjoint spanning subgraphs of $G'$ satisfying properties $1., 2., 3.$ and $4.$ in the conclusion of \cref{partition lemma} for the choice of $\{U_i,W_i\}_{i=1}^{K^3}$ as above, and with $\tau = 200/K$. In particular, by conclusion $3.$ of \cref{partition lemma}, we have 
\begin{equation}
\label{eqn:min-deg-hiwi}
\delta(H_i[W_i]) \geq \left(\frac{d'}{n} - \tau\right)|W_i| \geq \left(\frac{1}{2} + \frac{3\varepsilon}{4} - \tau\right)|W_i| \geq \left(\frac{1}{2} + \frac{\varepsilon}{2}\right)|W_i| 
\end{equation}
for all $i\in [K^3]$, where the last inequality holds since $\tau = \Theta(K^{-1}) = \Theta(\varepsilon^{10}) \ll \varepsilon$. Moreover, by conclusion $4.$ of \cref{partition lemma}, we also have that for all $i\in [K^3]$,
$$\frac{d'}{K^3} \geq \delta(H_i[U_i]) \geq (1-\tau)\frac{d'}{K^3} = \left(1-\frac{200}{K}\right)\frac{d'}{K^3}.$$ 


\item Note that for each $i\in [K^3]$, we can apply \cref{prop:nibbling-count} to the graph $H_i[U_i]$ since $\delta_i := \delta(H_i[U_i]) \geq (1-200/K)d'/K^{3} = \Theta(\varepsilon^{30} n) \gg n^{1/10} \geq |U_i|^{1/10}$ and $\Delta(H_i[U_i]) - \delta(H_i[U_i]) \leq (2\Delta(H_i[U_i]))^{4/5} \ll  (\Delta(H_i[U_i]))^{5/6}$, where the first inequality follows from conclusion $4.$ of \cref{partition lemma}. Let $\mathcal M_i$ denote a collection of matchings of $H_i[U_i]$ satisfying the conclusions of \cref{prop:nibbling-count}.  In particular, each matching $M \in \mathcal{M}_i$ covers at least 
$$|U_i| - |U_i|^{1-1/10J} \geq |U_i| - \frac{n}{n^{1/10J}} \geq |U_i| - \frac{n}{K^{10}}$$ 
vertices, where the last inequality holds since $K^{10} = \Theta(\varepsilon^{-100}) \ll n^{1/10J}$, and each vertex $u \in U_i$ is uncovered by at most $$|U_i|^{-1/10J}\delta_i \leq \frac{n}{K^3n^{1/10J}} \leq \frac{n}{K^{10}} $$ 
matchings in $\mathcal{M}_i$, where the last inequality holds since $K^{7} = \Theta(\varepsilon^{-70}) \ll n^{1/10J}$.  

Note also that for each $i\in [K^3]$, we can apply \cref{lemma:extend-to-pm} to $H:=H_i$ and $\mathcal{M}:=\mathcal{M}_i$ with $\tau = \varepsilon$. Indeed, hypotheses $1.$ and $2.$ follow from conclusion $1.$ of \cref{partition lemma}, hypothesis $3.$ follows from \cref{eqn:min-deg-hiwi}, hypothesis $4.$ follows from conclusion $3.$ of \cref{partition lemma} (since $|W_i|/10K^{3} = \Theta(n/K^{5}) \gg n/K^{6}$), hypotheses $(a)$ and $(b)$ follow from the computations earlier in this step, and finally, the number of matchings in $\mathcal{M}_i$ is at most $\delta_i \leq n/K^{3}$.   
Hence, we can extend $\mathcal M_i$ to a collection of edge-disjoint perfect matchings of $H_i$, which we will denote by $\overline{\mathcal M_i}$. 


\item 
Let $R$ be the graph consisting of all the edges in $E(G')$ which do not belong to any $\overline{\mathcal M_i}$. Then, $R$ is an $r_2$-regular graph with 
$$r_2 = d'-\sum_{i=1}^{K^3}\delta_{i} \leq d' - d'(1-200/K) = 200d'/K.$$
Since $H$ is a $(\varepsilon/10,r_1,n/2)$-good graph (with $\alpha:=\varepsilon/10$ and $r_1$ satisfying the hypotheses of \cref{proposition-completion}), and since $r_2 = 200d'/K = \Theta(n\varepsilon^{10}) \ll \alpha^{4}r_1/\log{n} = \Theta(n\varepsilon^{6}/\log{n})$, we can apply \cref{proposition-completion} to the graph $H\cup R$ in order to complete $\cup_{i\in [K^3]}\overline{\mathcal M_i}$ to obtain a $1$-factorization of $G$. 
\end{enumerate}

{\bf Stage 2:} We now show that the above algorithm can output the `correct' number of distinct $1$-factorizations. 
Throughout, we will assume that $n$ is a sufficiently large even integer. By \cref{rmk:counting partitions}, there are at least 
\begin{align*}
\left(1-2n^{-8}\right)(K^{3})^{\left(1-\frac{10}{K}\right)\frac{nd'}{2}} & =\left(1-2n^{-8}\right)\left(K^{3}e^{-30\log K/K}\right)^{nd'/2}\\
 & \gg \left(e^{-30\log{K}/K}\right)^{nd'/2}\left(K^{3}e^{-30\log K/K}\right)^{nd'/2}\\\
 & \geq\left(K^{3}e^{-60\log K/K}\right)^{nd'/2}\\
 & \geq\left(K^{3}e^{-60\log K/K}\right)^{\frac{nd}{2}(1-p)}\\
 & = \left(K^{3}e^{-60\log K/K}\right)^{nd/2}\left(K^{-3p}e^{60p\log K/K}\right)^{nd/2} \\ 
 & \gg \left(K^{3}e^{-60\varepsilon^{10}\log K}\right)^{\frac{nd}{2}}\left(e^{-3p\log K}\right)^{\frac{nd}{2}}\\
 & \geq \left(K^{3}\right)^{\frac{nd}{2}}\left(e^{-6p\log{K}}\right)^{\frac{nd}{2}}
\end{align*}
distinct ways to choose the collection of subgraphs $H_1,\dots,H_{K^3}$ in Step 3. 
Moreover, by \cref{prop:nibbling-count}, for each $i\in [K^3]$, there are at least
\begin{align*}
\left(\frac{(1-|U_i|^{-1/40J})\delta_i}{e^2}\right)^{\frac{ \delta_i |U_i|}{2}} 
& \geq \left(\frac{(1-2n^{-1/40J})\delta_i}{e^2}\right)^{\frac{ \delta_i n(1-2/K^{2})}{2}} \\
& \geq \left(\frac{(1-2n^{-1/40J})(1-200/K)d'}{K^{3}e^2}\right)^{\frac{nd'(1-200/K)(1-2/K^{2})}{2K^3}} \\
& \geq \left(\frac{(1-400\varepsilon^{10})d'}{K^{3}e^2}\right)^{\frac{nd'(1-400\varepsilon^{10})}{2K^{3}}}\\ 
& \geq \left(\frac{(1-400\varepsilon^{10})(1-p)d}{K^{3}e^2}\right)^{\frac{nd(1-p)(1-400\varepsilon^{10})}{2K^{3}}}\\
 & \geq\left(\frac{(1-2p)d}{K^{3}e^{2}}\right)^{\frac{nd}{2K^{3}}\left(1-2p\right)}\\
 & \geq\left(\frac{(1-2p)d}{K^{3}e^{2}}\right)^{\frac{nd}{2K^{3}}}\left(e^{-2p\log d}\right)^{\frac{nd}{2K^{3}}}
\end{align*}
distinct ways to choose a collection of edge-disjoint matchings $\mathcal{M}_i$ in $H_i[U_i]$ satisfying the conclusions of \cref{prop:nibbling-count}. Since distinct collections of edge-disjoint matchings $\mathcal{M}_i$ of $H_i[U_i]$ stay distinct upon the application of \cref{lemma:extend-to-pm} (see \cref{rmk:distinct-stays-distinct}), it follows that there are at least 
$$\prod_{i=1}^{K^3}\left(\frac{(1-2p)d}{K^{3}e^{2}}\right)^{\frac{nd}{2K^{3}}}\left(e^{-2p\log d}\right)^{\frac{nd}{2K^{3}}} = \left(\frac{(1-2p)d}{K^3 e^2}\right)^{\frac{nd}{2}}\left(e^{-2p\log{d}}\right)^{\frac{nd}{2}}$$
distinct ways to choose, one for each $i\in [K^3]$, a collection of edge-disjoint perfect matchings $\mathcal{M}_i$ of $H_i$ as at the conclusion of Step 4. of our algorithm in Stage 1. Together with the number of choices for $H_1,\dots,H_{K^3}$ in Step 3., it follows that the \emph{multiset} of $1$-factorizations of $G$ that can be obtained by the algorithm in Stage 1 has size at least  
\[
\left(\left(\frac{(1-2p)d}{K^{3}e^{2}}\right)K^{3}e^{-10p\log n}\right)^{\frac{nd}{2}}\geq\left(\frac{(1-20p\log{n})d}{e^{2}}\right)^{\frac{nd}{2}}.
\]
To complete the proof, it suffices to show that no $1$-factorization $\mathcal{F} = \{F_1,\dots,F_d\}$ is counted more than $\left(1+400p\log{n}\right)^{\frac{nd}{2}}$ times in the calculation above. Let us call a collection of edge-disjoint subgraphs $H_1,\dots,H_{K^3}$ of $G'$ 
\emph{consistent} with $\mathcal{F}$ if $H_1,\dots,H_{K^3}$ satisfy the conclusions of \cref{partition lemma} with the parameters in Step 3 (in particular, with respect to the \emph{fixed} collection $\{U_i,W_i\}_{i=1}^{K^3}$ from Step 2), and if $\mathcal{F}$ can be obtained by the algorithm after choosing $H_1,\dots,H_{K^3}$ in Step 3. It is clear that the number of times that $\mathcal{F}$ can be counted by the above computation is at most the number of collections $H_1,\dots,H_{K^3}$ which are consistent with $\mathcal{F}$, so that it suffices to upper bound the latter. For this, note that at most $r_1+200d'/K \leq 2r_1$ of the perfect matchings in $\mathcal{F}$ can come from Step 5 of the algorithm. Each of the other perfect matchings belongs completely to a single $H_i$ by construction. 
It follows that the number of consistent collections $H_1,\dots,H_{K^3}$ is at most 
$$n\cdot{d \choose 2r_{1}}\cdot\left(K^{3}\right)^{r_1 n}\cdot\left(K^{3}\right)^{d}.$$ 
Indeed, there are at most $n\cdot{d \choose 2r_{1}}$ ways to choose the perfect matchings coming from Step 5; these matchings contain at most $r_1 n$ edges; for each such edge, there are at most $K^3$ choices for which $H_i$ it should belong to; and for each of the remaining (at most $d$) matchings which are completely contained in some $H_i$, there are at most $K^{3}$ choices for which $H_i$ such a matching should belong to. Finally, observe that  
$$n\cdot {d \choose 2r_{1}}\cdot \left(K^{3}\right)^{r_1n}\cdot (K^3)^{d}  \ll d^{3r_{1}}K^{3r_{1}n + 3d} \ll K^{4r_1 n} \ll K^{10pnd} = \left(K^{20p}\right)^{\frac{nd}{2}}
\leq\left(1+400p\log{n}\right)^{\frac{nd}{2}},$$
which completes the proof. 
\end{proof}

\section{Proof of \cref{prop:nibbling-count}}
\label{appendix:nibbling}
We now show how the proof of the main result in \cite{DGP}, which is based on the celebrated R\"odl nibble \cite{Rodl}, implies \cref{prop:nibbling-count}. The organization of this section is as follows: \cref{algorithm:nibble} records the nibbling algorithm used in \cite{DGP}; \cref{thm:nibbling-analysis} records the conclusion of the analysis in \cite{DGP}; \cref{prop:good-nibbles} adapts \cref{thm:nibbling-analysis} for our choice of parameters; \cref{rmk:nibble-large-matchings} shows that the collection of matchings produced by \cref{algorithm:nibble} satisfies the conclusions of \cref{prop:nibbling-count}, and finally,
following this remark, we present the proof of \cref{prop:nibbling-count}. \\

The following algorithm (\cref{algorithm:nibble}) is a slight variant of the algorithm used in \cite{DGP} to find an almost-optimal edge coloring of a graph. Here, we show how it can be used to generate `almost the correct number' of `equitable collections of edge-disjoint large matchings' of an `almost regular graph' (all in the sense of \cref{prop:nibbling-count}).  Since we are not concerned with the running time of the algorithm, we are able to make a simpler choice for the initial palettes of the edges as compared to \cite{DGP}. Moreover, since our goal is to output a large collection of edge-disjoint matchings as in \cref{prop:nibbling-count}, we have no need for the trivial `Phase 2' of the algorithm in \cite{DGP}. \\
\begin{algorithm}
\caption{The Nibble Algorithm}
\label{algorithm:nibble}
Input: The initial graph $G_0:= G$ on $n$ vertices with minimum degree $\delta$ and maximum degree $\Delta$. 
Each edge $e = uv$ is initially given the palette $A_0(e) = \{1,\dots, \delta\}$.
For $i=0,1,\dots,t_\tau - 1$ stages, repeat the following:
\begin{itemize}
\item \emph{(Select nibble)} Each vertex $u$ randomly and independently selects each uncolored edge incident to itself with probability $\tau/2$. An edge is considered selected if either or both of its endpoints selects it. 
\item \emph{(Choose tentative color)} Each selected edge $e$ chooses independently at random a tentative color $t(e)$ from its palette $A_i(e)$ of currently available colors. 
\item \emph{(Check color conflicts)} Color $t(e)$ becomes the final color of $e$ unless some edge incident to $e$ has chosen the same tentative color. 
\item \emph{(Update graph and palettes)} The graph and the palettes are updated by setting 
$$ G_{i+1} = G_i - \{e \vert e \text{ got a final color}\}$$
and, for each edge $e$, setting 
$$ A_{i+1}(e) = A_i(e) - \{t(f)\vert f \text{ incident to } e, t(f) \text{ is the final color of } f \}.$$
\end{itemize}
\end{algorithm}

The analysis of this algorithm is based on controlling the following three quantities:
\begin{itemize}
\item $|A_i(u)|$, the size of the implicit palette of vertex $u$ at the end of stage $i$, where the implicit palette $A_i(u)$ denotes the set of colors not yet successfully used by any edge incident to $u$. 
\item $|A_i(e)|$, the size of the palette $A_i(e)$ of edge $e$ at the end of stage $i$. Note that $A_i(uv) = A_i(u) \cap A_i(v)$. 
\item $\deg_{i,\gamma}(u)$, the number of neighbors of $u$ which, at the end of stage $i$, have color $\gamma$ in their palettes. 
\end{itemize}

We record the outcome of their analysis as \cref{thm:nibbling-analysis}. Before stating it, we need some notation.  

Define $d_i$ and $a_i$ as follows. First, define initial values
$$d_0, a_0 := \Delta$$
and then, recursively define 
\begin{align*}
d_{i} & :=(1-p_{\tau})d_{i-1}=(1-p_{\tau})^{i}\Delta\\
a_{i} & :=(1-p_{\tau})^{2}a_{i-1}=(1-p_{\tau})^{2i}\Delta=d_{i}^{2}/\Delta,
\end{align*}
where 
\[
p_{\tau}:=\tau\left(1-\frac{\tau}{4}\right)e^{-2\tau(1-\tau/4)}.
\]
In particular, note that setting 
$$t_\tau := \frac{1}{p_\tau}\log{\frac{4}{\tau}},$$ 
we have $d_{t_\tau} \leq {\tau\Delta}/4$. 
\begin{theorem}[\cite{DGP}, Lemmas 10, 13 and 16, and the discussion in Section 5.5.]
\label{thm:nibbling-analysis}
There exist constants $K,c> 0$ such that, if at the end of stage $i$ of \cref{algorithm:nibble}, the following holds for all vertices $u$, edges $e$ and colors $\gamma$: 
\begin{align*}
|A_{i}(u)| & =(1\pm e_{i})d_{i}\\
|A_{i}(e)| & =(1\pm e_{i})a_{i}\\
\deg_{i,\gamma}(u) & =(1\pm e_{i})a_{i}
\end{align*}
then, except with probability at most $15n^{-1}$, the following holds at the end of stage $i+1$ for all vertices $u$, edges $e$ and colors $\gamma$:
\begin{align*}
|A_{i+1}(u)| & =(1\pm e_{i+1})d_{i+1}\\
|A_{i+1}(e)| & =(1\pm e_{i+1})a_{i+1}\\
\deg_{i+1,\gamma}(u) & =(1\pm e_{i+1})a_{i+1},
\end{align*}
where
\[
e_{i+1}=C(e_{i}+c\sqrt{\log n/a_{i}})=C(e_{i}+c(1-p_{\tau})^{-i}\sqrt{\log n/\Delta}),
\]
with $C = 1+K\tau$. 
\end{theorem}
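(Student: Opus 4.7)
The plan is to establish each of the three invariant bounds at stage $i+1$ separately, by first computing the expected value of the target quantity assuming the stage-$i$ invariants, then showing concentration around that expectation via an appropriate martingale inequality, and finally taking a union bound over all $O(n^2)$ relevant events. The final error recursion $e_{i+1} = C(e_i + c\sqrt{\log n/a_i})$ will then arise by combining a multiplicative error from the expectation calculation with an additive error from the concentration.

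First, I would compute $\mathbb{E}[|A_{i+1}(u)|]$, $\mathbb{E}[|A_{i+1}(e)|]$, and $\mathbb{E}[\deg_{i+1,\gamma}(u)]$. Fix $\gamma \in A_i(u)$ and analyze the probability that $\gamma$ is removed from $A_i(u)$ during stage $i+1$, i.e.\ the probability that some edge at $u$ finalizes color $\gamma$. Using the stage-$i$ invariant $\deg_{i,\gamma}(u) = (1\pm e_i)a_i$, the independent Bernoulli$(\tau/2)$ selection per endpoint, the uniform choice of tentative color from a palette of size $(1\pm e_i)a_i$, and the independence of the conflict-checks across edges at distance $>1$, a direct (short) computation yields $\Pr[\gamma \text{ removed}] = p_\tau(1 + O(\tau e_i))$, where $p_\tau = \tau(1-\tau/4)e^{-2\tau(1-\tau/4)}$ is precisely the quantity that emerges when the exponential factor is obtained from the product $\prod_{f \ni e,\, f\neq e}(1-\Pr[f \text{ picks } \gamma])$. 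Hence $\mathbb{E}[|A_{i+1}(u)|] = (1-p_\tau)(1\pm O(\tau e_i))|A_i(u)| = (1\pm (1+O(\tau))e_i)d_{i+1}$. For $\mathbb{E}[|A_{i+1}(e)|]$ with $e = uv$, $\gamma$ survives only if neither $u$ nor $v$ finalizes it, and the two events are asymptotically independent, giving the $(1-p_\tau)^2$ factor needed for $a_{i+1} = (1-p_\tau)^2 a_i$. An identical calculation applies to $\mathbb{E}[\deg_{i+1,\gamma}(u)]$, where one also accounts for the probability that the edge $uv$ itself survives uncolored.

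Second, for concentration, I would view each of the three target quantities as a function of the independent random choices made during stage $i+1$: one Bernoulli$(\tau/2)$ per oriented (vertex, incident-edge) pair (the select-nibble choices), and one uniformly random color per selected edge (the tentative-color choices). Since each target quantity depends only on random variables supported within a bounded-radius neighborhood of the relevant $u$, $e$ or $(u,\gamma)$, I would apply a vertex-exposure martingale argument together with Azuma's inequality, where the bounded-difference constants are controlled by the local degree. The cleanest route, following the DGP write-up, is to decompose the deviation into a ``selection'' contribution and a ``tentative color'' contribution, bound each by Azuma with Lipschitz constant $O(1)$ per random variable and $O(\Delta)$ underlying variables contributing to each quantity, and obtain deviation at most $c\sqrt{a_i\log n}$ with probability at most $n^{-3}$. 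Converting to multiplicative error around the quantities of size $\asymp d_{i+1}$ or $\asymp a_{i+1}$ yields an additive $(1-p_\tau)^{-i}c\sqrt{\log n/\Delta} = c\sqrt{\log n/a_i}$ error (using $a_i = (1-p_\tau)^{2i}\Delta$).

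Combining the two, the new multiplicative error is at most $(1+K\tau)e_i + c\sqrt{\log n/a_i}$, which can be rewritten as $(1+K\tau)(e_i + c\sqrt{\log n/a_i})$ after adjusting $K$, giving the claimed recursion with $C = 1+K\tau$. A union bound over the $n$ vertex palettes, the at most $\binom{n}{2}$ edge palettes, and the at most $n\delta \leq n^2$ pairs $(u,\gamma)$ gives a total failure probability of at most $15 n^{-1}$ once each individual event fails with probability $\leq n^{-3}$.

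The main obstacle I expect is the concentration step: a naive bounded-differences argument revealing all selection and tentative-color variables produces Lipschitz constants so large that the deviation bound becomes $O(\sqrt{\Delta \log n})$, which is far too weak to close the recursion (since $a_i \ll \Delta$ for large $i$). The crux of the DGP analysis is to exploit the \emph{local} dependence of each target quantity on the random inputs, so that effectively only $O(a_i)$ coordinates matter per quantity and Azuma gives the sharper $O(\sqrt{a_i \log n})$ deviation. Establishing this localization rigorously — for $|A_{i+1}(u)|$ and especially for $\deg_{i+1,\gamma}(u)$, where one must track both palette dynamics at neighbors and edge-survival events — is the technical heart of the argument.
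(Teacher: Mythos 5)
Note that the paper does not actually prove this theorem: it is stated as a direct citation of \cite{DGP} (``Lemmas 10, 13 and 16, and the discussion in Section 5.5''), and the sentence preceding it in the paper says ``We record the outcome of their analysis.'' There is therefore no in-paper proof to compare against, and the relevant question is whether your sketch faithfully reconstructs the DGP argument.

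At the level you have written it, the structure is right and matches DGP: compute the conditional expectations of $|A_{i+1}(u)|$, $|A_{i+1}(e)|$, $\deg_{i+1,\gamma}(u)$ from the stage-$i$ invariants (correctly producing $p_\tau = \tau(1-\tau/4)e^{-2\tau(1-\tau/4)}$ via the inclusion--exclusion on endpoint selections and the no-conflict product, and correctly observing that the removal events for a fixed $\gamma$ at a fixed $u$ are mutually exclusive so that the single-stage removal probability is exactly $\approx p_\tau$); then concentrate each quantity around its expectation and union-bound over $O(n^2)$ events; then fold the $O(\tau e_i)$ multiplicative error from the expectation step and the additive concentration error into the recursion $e_{i+1} = C(e_i + c\sqrt{\log n / a_i})$. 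Your accounting of the union bound and of how the additive deviation rescales into $\sqrt{\log n/a_i}$ is also consistent.

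The genuine gap is the one you flag yourself: the concentration step is asserted, not proved. A bare Azuma/bounded-differences argument on the full stage-$(i+1)$ randomness does not deliver $O(\sqrt{a_i\log n})$ (or, for the vertex palette, $O(d_{i+1}\sqrt{\log n/a_i})$) deviations for free; a single tentative-color change can in principle flip $\Theta(1)$ finalize/fail decisions, but the number of relevant random coordinates for, say, $|A_{i+1}(e)|$ or $\deg_{i+1,\gamma}(u)$ must be shown to be $O(a_i)$ rather than $O(\Delta^2)$, and the Lipschitz constants must be controlled carefully in the presence of the conflict-check step. DGP's Lemmas 10, 13 and 16 are exactly this: three separate, carefully localized concentration arguments, one per invariant, with the exposure order and the Lipschitz bounds worked out individually. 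Your sketch identifies the right obstacle and the right remedy (locality), but stops at the point where the DGP lemmas begin, so as it stands it is an accurate outline rather than a self-contained proof of the theorem.
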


\begin{remark}In our case, we have $$|A_0(u)| =|A_0(e)| = \delta = \Delta\left(1 - \frac{\Delta - \delta}{\Delta}\right),$$ 
and 
$$\deg_{0,\gamma} = \deg(u) = \Delta\left(1 \pm \frac{\Delta - \delta}{\Delta}\right)$$
for all vertices $u$, edges $e$, and colors $\gamma$. Therefore, we can take $$ e_0 = \frac{\Delta - \delta}{\Delta}\leq \Delta^{-1/6}.$$
\end{remark}

\begin{proposition} 
\label{prop:good-nibbles}
Let $n$ be a sufficiently large integer. There exists a constant $J > 0$ for which the following holds. Let $G$ be a graph on $n$ vertices with $\Delta \geq \log^{12}n$ and $e_0: = (\Delta - \delta)/\Delta \leq \Delta^{-1/6}$, and let $\Delta^{-1/J}\leq \tau < 1/100$.
Then, with probability at least $1 - 15t_\tau n^{-1} $, the following holds for the execution of \cref{algorithm:nibble} on $G$ with parameter $\tau$ for $t_{\tau}-1$ stages: for all $0 \leq i \leq t_\tau$, and for all vertices $u$, all edges $e$, and all colors $\gamma$,
\begin{itemize}
\item $|A_i(u)| = (1\pm \tau^{3}) d_i$
\item $|A_i(e)| = (1 \pm \tau^{3}) a_i$
\item $\deg_{i,\gamma}(u) = (1\pm \tau^{3}) a_i$
\end{itemize}
\end{proposition}

\begin{proof}
Setting $A:= c\sqrt{\log{n}/\Delta}$ and $B:=1/(1-p_\tau)$, we see from \cref{thm:nibbling-analysis} and the union bound that, except with probability at most $15t_{\tau}n^{-1}$, 
$$e_\ell = C^{\ell}e_0 + A[C^\ell + C^{\ell-1}B+\dots + CB^{\ell-1}]$$
for all $0\leq \ell \leq t_{\tau}$. Since $B = 1/(1-p_\tau) \leq 1 + K'\tau$ for some constant $K' > 0$, it follows that 
$$e_\ell \leq \ell(1+L\tau)^\ell c \sqrt{\log{n}/\Delta} + (1+L\tau)^\ell e_0,$$
where $L = \max\{K,K',c\}$. Since $e_0 \leq \Delta^{-1/6}$ and $\Delta \geq \log^{12}{n}$, it follows that 
$$ e_\ell \leq 2c\ell \exp(L\tau \ell) \Delta^{-1/6}.$$
The right hand side is maximized when $\ell = t_\tau$. Finally, since $\tau < 1/100$ by assumption, we get that  
$$e_{t_\tau}\leq \left(\frac{1}{\tau}\right)^{3L}\Delta^{-1/6} \leq \tau^{3},$$
where the last inequality holds provided we take $J \geq 18(L+1)$.
\end{proof}

\begin{remark}  
\label{rmk:nibble-large-matchings}
Consider any partial edge coloring of $G$ satisfying the conclusions of \cref{prop:good-nibbles}, and let $\mathcal{M}:=\{M_1,\dots,M_\delta\}$ denote the collection of edge-disjoint matchings of $G$ obtained by letting $M_{\gamma}$ be the set of edges colored with $\gamma$. Then, $|M_{\gamma}| \geq (1-\tau)n/2 $ for all $\gamma \in [\delta]$. To see this, note that any vertex $u$ which is not covered by $M_\gamma$ must have $\gamma$ in its implicit palette $A_{t_\tau}(u)$. Moreover, every vertex $u$ has at least $|A_{t_\tau}(u)| > d_{t_\tau}/2$ missing colors, and therefore at least as many uncolored edges attached to it at the end of stage $t_\tau - 1$. It follows that every vertex which is uncovered by $M_\gamma$ contributes at least $d_{t_\tau}/2$ to the sum $\sum_{u}\deg_{t_\tau, \gamma}(u)$. Hence, if $n_\gamma$ denotes the number of vertices uncovered by $M_\gamma$, then 
$$ \frac{n_\gamma d_{t_\tau}}{2} \leq \sum_{u}\deg_{t_\tau, \gamma}(u).$$
On the other hand, we have 
$$ \sum_{u}\deg_{t_\tau, \gamma}(u) \leq 2a_{t_\tau}n \leq \frac{2d_{t_\tau}^2 n}{\Delta}.$$
Combining these two inequalities and using $d_{t_\tau} \leq \tau \Delta/4$, we see that $n_\gamma \leq \tau n$, as desired. 

Moreover, as mentioned above, the number of times a given vertex $u$ is left uncovered by a matching in $\mathcal{M}$ equals $|A_{t_\tau}(u)|$, which is at most $2d_{t_\tau} \leq \tau \Delta/2$. Below, in the proof of \cref{prop:nibbling-count}, we will take $\tau := \Delta^{-1/J}$. Using this choice of parameter in the two estimates in the present remark, it follows readily that that $\mathcal{M}$ satisfies the conclusions of \cref{prop:nibbling-count}. 

\end{remark}

We are now ready to prove \cref{prop:nibbling-count}. 
\begin{proof}[Proof of \cref{prop:nibbling-count}]
\label{proof-nibbling-count}
We will view the execution of \cref{algorithm:nibble} as a branching process where at each stage, we branch out according to which edges get assigned final colors, and which final colors are assigned to these edges. In particular, the leaves of this branching process are at distance $t_\tau -1$ from the root.

We say that a leaf $L$ of this branching process is a \emph{good leaf} if the unique path from the root to $L$ represents an execution of \cref{algorithm:nibble} such that at all stages, all vertices $u$, all edges $e$, and all colors $\gamma$ satisfy the conclusions of \cref{prop:good-nibbles}. Also, we say that a partial coloring with $\delta$ colors is a \emph{good partial coloring} if the corresponding collection of edge-disjoint matchings $\mathcal{M} = \{M_{1},\dots,M_{\delta}\}$ satisfies the conclusions of \cref{prop:nibbling-count}. Note by \cref{rmk:nibble-large-matchings} that the partial edge coloring corresponding to a good leaf is a good partial coloring. Therefore, in order to lower bound the number of good partial colorings, it suffices to lower bound the number of good leaves, and upper bound the number of distinct leaves any given good partial coloring can correspond to. 

For this, let $Q > 0$ be an upper bound for the probability of the branching process reaching a given good leaf. Since, by \cref{prop:good-nibbles}, the probability that the branching process reaches some good leaf is at least $1-15t_\tau n^{-1}$, it follows that the number of good leaves is at least $(1-15t_\tau n^{-1})/Q$. Further, let $R > 0$ be such that for any good partial coloring $\mathcal{C}$, there are at most $R$ leaves of the branching process whose corresponding partial coloring is $\mathcal{C}$. Then, it follows that the number of good partial colorings is at least $(1-15t_\tau n^{-1})/QR$. The remainder of the proof consists of upper bounding the quantity $QR$. 

To upper bound $Q$, fix a good leaf $L$ and note that in the $i^{th}$ stage of the execution corresponding to $L$, 
$m_i$ specific edges must be selected and assigned their final colors, where 
\begin{align*}
m_{i}: & =(1\pm\tau^{3})\frac{d_{i}n}{2}-(1\pm\tau^{3})\frac{d_{i+1}n}{2}
 =\frac{d_{i}n}{2}\left(1\pm\tau^{3}-(1-p_{\tau})(1\pm\tau^{3})\right)\\
 &= \frac{d_{i}n}{2}\left(p_\tau \pm 3\tau^{3} \right)
  =\frac{\tau d_{i}n}{2}\left(1\pm10\tau\right),
\end{align*}
and the last line follows since $\tau < 1/100$. 
Since each edge is selected independently with probability $\tau(1-\tau/4)$ (an edge is selected if and only if it is selected by at least one of its endpoints, which happens with probability $\tau/2 + \tau/2 - \tau^{2}/4$ by the inclusion-exclusion principle), and each selected edge chooses one of at least $(1-\tau^{3})a_i$ colors uniformly at random, it follows that the probability that the branching process makes the specific choices at the $i^{th}$ stage of $L$ is at most 
\begin{align*}
\left(\frac{\tau(1-\frac{\tau}{4})}{(1-\tau^{3})a_{i}}\right)^{m_{i}}\left(1-\tau\left(1-\frac{\tau}{4}\right)\right)^{\frac{(1-\tau^{3})d_{i}n}{2}-m_{i}} & \leq\left(\frac{\tau(1+\tau)}{a_{i}}\right)^{m_{i}}\left(1-\tau\left(1-\frac{\tau}{4}\right)\right)^{\frac{(1-\tau^{3})d_{i}n}{2}-m_{i}}\\
 & \leq\left(\frac{\tau(1+\tau)}{a_{i}}\right)^{m_{i}}\left(1-\tau\left(1-\frac{\tau}{4}\right)\right)^{\frac{(1-\tau^{3})d_{i}n-(1\pm10\tau)\tau d_{i}n}{2}}\\
 & \leq\left(\frac{\tau(1+\tau)}{a_{i}}\right)^{m_{i}}\exp\left(-\tau\left(1-\frac{\tau}{4}\right)\frac{d_{i}n}{2}\right)^{(1-\tau^{3})-(1\pm10\tau)\tau}\\
 & \leq\left(\frac{\tau(1+\tau)}{a_{i}}\right)^{m_{i}}\exp\left(-\left(1-10\tau\right)\frac{\tau d_{i}n}{2}\right),
\end{align*}
where the first and last inequalities use $\tau < 1/100$. 
Since the randomness in different stages of the branching process is independent, it follows that 
\begin{eqnarray*}
Q & \leq&\prod_{i=0}^{t_{\tau}-1}\left(\frac{\tau(1+\tau)}{a_{i}}\right)^{m_{i}}\exp\left(-(1-10\tau)\frac{\tau d_{i}n}{2}\right)
 =\exp\left(-(1-10\tau)\frac{\tau n}{2}\sum_{i=0}^{t_{\tau}-1}d_{i}\right)\prod_{i=0}^{t_{\tau}-1}\left(\frac{\tau(1+\tau)}{a_{i}}\right)^{m_{i}}\\
 & =&\exp\left(-(1-10\tau)(1\pm10\tau)\frac{\Delta n}{2}\right)\prod_{i=0}^{t_{\tau}-1}\left(\frac{\tau(1+\tau)}{a_{i}}\right)^{m_{i}}
 =\exp\left(-(1\pm25\tau)\frac{\Delta n}{2}\right)\prod_{i=0}^{t_{\tau}-1}\left(\frac{\tau(1+\tau)}{\Delta(1-p_{\tau})^{2i}}\right)^{m_{i}}\\
 & =&\exp\left(-(1\pm25\tau)\frac{\Delta n}{2}\right)\left(\frac{\tau(1+\tau)}{\Delta}\right)^{\sum_{i=0}^{t_{\tau}-1}m_{i}}(1-p_{\tau})^{-\sum_{i=0}^{t_{\tau}-1}2im_{i}}\\
 & =&\exp\left(-(1\pm25\tau)\frac{\Delta n}{2}\right)\left(\frac{\tau(1+\tau)}{\Delta}\right)^{\frac{(1\pm10\tau)\tau n}{2}\sum_{i=0}^{t_{\tau}-1}d_{i}}(1-p_{\tau})^{-\sum_{i=0}^{t_{\tau}-1}2im_{i}}\\
 & =&\exp\left(-(1\pm25\tau)\frac{\Delta n}{2}\right)\left(\frac{\tau(1+\tau)}{\Delta}\right)^{\frac{(1\pm25\tau)\Delta n}{2}}(1-p_{\tau})^{-\sum_{i=0}^{t_{\tau}-1}2im_{i}}\\
 &=&\exp\left(-(1\pm30\tau)m\right)\left(\frac{\tau(1+\tau)}{\Delta}\right)^{(1\pm30\tau)m}(1-p_{\tau})^{-\sum_{i=0}^{t_{\tau}-1}2im_{i}}, 
\end{eqnarray*}
where we have used that 
$$\sum_{i=0}^{t_{\tau}-1}d_{i}  =\Delta\sum_{i=0}^{t_{\tau}-1}(1-p_{\tau})^{i}\\
  =(1\pm10\tau)\frac{\Delta}{\tau} $$
in the third and seventh lines, $\tau < 1/100$ in the fourth and seventh lines, and $ m = (1\pm \tau)\Delta n/2$ in the last line. 

To upper bound $R$, it suffices to upper bound the number of ways in which the edges of any good partial coloring can be partitioned into sets of size $\{(1\pm 10\tau)\tau d_i n/2\}_{i=0}^{t_\tau - 1}$, since the number of edges colored by the algorithm in the $i^{th}$ stage is $(1\pm 10\tau)\tau d_i n/2$. For this, note that there are at most $(10\tau^{2}\Delta n)^{t_\tau}$ ways to choose the sizes of these $t_\tau$ sets, and for each such choice for the sizes of the sets, there are at most $m!/\prod_{i=0}^{t_\tau -1}((1-10\tau)m_i)!$ ways to partition the edges into these sets. Therefore, 
\begin{align*}
R(10\tau^{2}\Delta n)^{-{t_\tau}} & \leq\frac{m!}{\prod_{i=0}^{t_{\tau}-1}((1-10\tau)m_{i})!} \leq m\left(\frac{m}{e}\right)^{m}\prod_{i=0}^{t_{\tau}-1}\left(\frac{e}{(1-10\tau)m_{i}}\right)^{(1-10\tau)m_{i}}\\
 & =m \left(\frac{m}{e}\right)^{m}\prod_{i=0}^{t_{\tau}-1}\left(\frac{2e}{(1-10\tau)(1\pm10\tau)\tau d_{i}n}\right)^{(1-10\tau)m_{i}}\\
 & =m \left(\frac{m}{e}\right)^{m}\prod_{i=0}^{t_{\tau}-1}\left(\frac{2e}{(1-25\tau)\tau\Delta(1-p_{\tau})^{i}n}\right)^{(1-10\tau)m_{i}}\\
 & =m \left(\frac{m}{e}\right)^{m}\left(\frac{2e}{(1-25\tau)\tau\Delta n}\right)^{(1-10\tau)\sum_{i=0}^{t_{\tau}-1}m_{i}}\prod_{i=0}^{t_{\tau}-1}(1-p_{\tau})^{-im_{i}(1-10\tau)}\\
 & =m \left(\frac{m}{e}\right)^{m}\left(\frac{2e}{(1-25\tau)\tau\Delta n}\right)^{\frac{(1\pm25\tau)\Delta n}{2}}(1-p_{\tau})^{-(1-10\tau)\sum_{i}im_{i}}\\
 & \leq m \left(\frac{m}{e}\right)^{m}\left(\frac{e}{(1-25\tau)\tau m}\right)^{(1\pm30\tau)m}(1-p_{\tau})^{-(1-10\tau)\sum_{i}im_{i}}\\
 & \leq\left(\frac{m}{e}\right)^{30\tau m}\left(\frac{(1\pm50\tau)}{\tau}\right)^{m(1\pm30\tau)}(1-p_{\tau})^{-(1-10\tau)\sum_{i}im_{i}}\\
 & \leq\left(me\right)^{100\tau m}\left(\frac{1}{\tau}\right)^{m(1\pm30\tau)}(1-p_{\tau})^{-(1-10\tau)\sum_{i}im_{i}}, 
\end{align*}
where the second line uses the standard approximation $ (k/e)^k \leq k! \leq k(k/e)^k$ for $k \geq 10$; the third line uses the definition of $m_i$; the fourth line uses the definition of $d_i$ along with $\tau < 1/100$; the sixth line uses $\sum_{i=0}^{t_\tau - 1}m_i = (1\pm 25)\Delta n/2$ as shown in the calculation of the upper bound on $Q$; the seventh line uses $m = (1\pm \tau)\Delta n/2$; the eighth line uses $ \Delta^{-1/J} < \tau < 1/100$, and the last line uses $(1+50\tau) \leq e^{50\tau}$ and $\tau < 1/100$.    

It follows that 
\begin{align*}
QR & \leq(10\tau^{2}\Delta n)^{t_\tau}\left(me\right)^{100\tau m}\exp\left(-(1-30\tau)m\right)\left(\frac{\tau(1+\tau)}{\Delta}\right)^{(1\pm30\tau)m}\left(\frac{1}{\tau}\right)^{m(1\pm30\tau)}(1-p_{\tau})^{-\sum_{i}3im_{i}}\\
 & \leq(10\tau^{2}\Delta n)^{t_\tau}\left(\frac{me\Delta}{\tau}\right)^{200\tau m}\left(\frac{1}{e\Delta}\right)^{m}(1-p_{\tau})^{-\sum_{i}3im_{i}}\\
 & \leq(10\tau^{2}\Delta n)^{t_\tau}\left(\frac{me\Delta}{\tau}\right)^{200\tau m}\left(\frac{1}{e\delta}\right)^{m}e^{3p_{\tau}(1+10\tau)\sum_{i=0}^{t_{\tau}-1}im_{i}}\\
 & \leq m^{t_\tau}(e\delta)^{-m}\left(\frac{me\Delta}{\tau}\right)^{200\tau m}e^{3(1\pm30\tau)\frac{\tau\Delta n}{2p_{\tau}}}\\
 & \leq m^{\tau m}(e\delta)^{-m}\left(\frac{me\Delta}{\tau}\right)^{200\tau m}e^{3(1\pm50\tau)m}\\
 & \leq (e\delta)^{-m}e^{3m}\left(\frac{me\Delta}{\tau}\right)^{1000\tau m}\\
 & =\left(\frac{e^{2}}{\delta}\right)^{m}e^{1000\tau m\log(me\Delta/\tau)}\\
 & \leq\left(\frac{e^{3000\tau\log(m/\tau)}e^{2}}{\delta}\right)^{m},
\end{align*}
where the second line uses the inequality $(1+\tau) \leq e^{\tau}$; the third line uses the inequality $(1-p_\tau) \geq e^{-p_\tau(1+10\tau)}$ which holds since $\tau < 1/100$; the fourth line uses the following computation:
\begin{align*}
\sum_{i=0}^{t_{\tau}-1}im_{i} & =(1\pm10\tau)\frac{\tau n}{2}\sum_{i=0}^{t_{\tau}-1}id_{i}
  =(1\pm10\tau)\frac{\tau\Delta n}{2}\sum_{i=0}^{t_{\tau}-1}i(1-p_{\tau})^{i}\\
 & \leq(1\pm10\tau)\frac{\tau\Delta n}{2}\sum_{i=0}^{\infty}i(1-p_{\tau})^{i}
 =(1\pm10\tau)\frac{\tau\Delta n}{2}\frac{1}{p_{\tau}^{2}},
\end{align*}
along with $\tau < 1/100$, and the fifth line uses $m=(1\pm \tau)\Delta n/2$ along with $\tau < (1+5\tau)p_\tau$ and $t_\tau < 1/\tau^{2} < \tau m$ which holds since $m^{-1/J} \leq \tau < 1/100$.  
Finally, substituting $\tau = \Delta^{-1/J}$ completes the proof. 
\end{proof}

\section{Concluding remarks and open problems}

\begin{itemize}
\item We proved that the number of $1$-factorizations in a $d$-regular graph is at least 
$$\left((1+o(1))\frac{d}{e^2}\right)^{dn/2},$$ 
provided that $d\geq \frac{n}{2}+\varepsilon n$. As mentioned in the introduction, this is asymptotically best possible. It will be very interesting to obtain a similar result for all $d\geq 2\lceil n/4\rceil-1$ (the existence of a $1$-factorization in this regime was proven in \cite{CKLOT}).
\item As mentioned in \cref{remark:good error term}, we obtain an explicit function (polynomial in $n$) for the $(1+o(1))$-term in the bound on the number of $1$-factorizations. We have written such a formula 
with the hope that it could be useful towards studying the behavior of \emph{typical} $1$-factorizations. For example, using similar bounds on the number of Steiner triple systems as obtained by Keevash \cite{Keevash}, Kwan \cite{Kwan} was recently able to study some non-trivial properties of typical Steiner Triple Systems. Therefore, we hope that building upon Kwan's ideas and using our counting argument one could obtain some non-trivial properties of typical $1$-factorizations. For example, can one show that a typical $1$-factorization of $K_n$ contains a \emph{rainbow} Hamiltonian path? (that is, a Hamiltonian path which uses exactly one edge from each of the perfect matchings). 
\item Another very interesting direction is to study the number of $1$-factorizaitons in hypergraphs. In this setting, much less is known and every non-trivial lower bound on the number of such factorizations should require new ideas. We are curious whether one can attack this problem using some clever reduction to the graph setting and use our ideas for the `completion part'. 
\end{itemize}

{\bf Acknowledgment:} The first author is grateful to Kyle Luh and Rajko Nenadov for helpful discussions at the first step of this project. We would also like to thank the anonymous referees for a very thorough reading of the manuscript and numerous invaluable comments.

\end{document}